\pgfplotsset{compat=1.11}
\newcolumntype{L}[1]{>{\raggedright\arraybackslash}p{#1}} 
\newcolumntype{C}[1]{>{\centering\arraybackslash}p{#1}} 
\newcolumntype{R}[1]{>{\raggedleft\arraybackslash}p{#1}} 
\theoremstyle{plain}
\theoremstyle{definition}
\theoremstyle{remark}
\def\BState{\State\hskip-\ALG@thistlm}
\DeclareSymbolFont{msbm}{U}{msb}{m}{n}
\DeclareMathSymbol{\N}{\mathalpha}{msbm}{'116}
\DeclareMathSymbol{\R}{\mathalpha}{msbm}{'122}
\newcommand{\be}{\begin{eqnarray}}
\newcommand{\ee}{\end{eqnarray}}
\newcommand{\Ind}{\mathds{1}}
\newcommand{\Div}{\operatorname{div}}
\newtheorem{theorem}{Theorem}[section]
\newtheorem{lemma}[theorem]{Lemma}
\newtheorem{definition}[theorem]{Definition}
\newtheorem{remark}[theorem]{Remark}
\numberwithin{equation}{section}
\newcommand{\sgn}{\operatorname{sgn}}
\begin{document}

\title{A pedestrian flow model with stochastic velocities: microscopic and macroscopic approaches} 

\author{S.\ G\"ottlich\footnotemark[1], \; S.\ Knapp\footnotemark[1], \; P.\ Schillen\footnotemark[1]}

\footnotetext[1]{University of Mannheim, Department of Mathematics, 68131 Mannheim, Germany (goettlich@uni-mannheim.de, stknapp@mail.uni-mannheim.de).}

\date{\today}

\maketitle

\begin{abstract}
\noindent
We investigate a stochastic model hierarchy for pedestrian flow.
Starting from a microscopic social force model, where the
pedestrians switch randomly between the two states stop-or-go,
we derive an associated macroscopic model of conservation law type.
Therefore we use a kinetic mean-field equation and introduce a new problem-oriented  
closure function. Numerical experiments are presented to compare
the above models and to show their similarities.
\end{abstract}

{\bf AMS Classification.} 90B20, 65Cxx, 35L60

{\bf Keywords.}  interacting particle system, stochastic processes, mean field equations, hydrodynamic limit, macroscopic pedestrian model, numerical simulations \\

\section{Introduction}
\label{sec:1}
The modeling of crowd dynamics is a current research topic and provides a useful tool
for evacuation planning.
A good overview of the existing literature can be found in \cite{Bellomo2011, Degond2013}, 
where mainly two classes of modeling approaches (microscopic versus macroscopic) are distinguished. 
Microscopic pedestrian models typically rely on Newton-type dynamics as proposed e.g. in~\cite{HelbingMolnar1998, PiccoliTosin2009} 
while macroscopic models can be either derived via limiting processes \cite{Chen2016, Degond2013, Klar_2014} or 
phenomenologically, see e.g. \cite{Helbing1992, Hughes_2002, PiccoliTosin2009, PiccoliTosin2011}.
Starting from a microscopic level model extensions include for example vision cones \cite{Degond2013Vision}, shortest-path information \cite{Etikyala2014} and diffusion \cite{Degond2013, Etikyala2014}. 

In this work, we focus on a well-known phenomenon in crowds which is the sudden presence of non-moving people stopping immediately
due to external attractions or, more recently, new cell phone messages. 
This leads to significant changes in the individual walking velocities, rerouting actions and hence to local
bottlenecks depending on the crowd density. 
From reality we observe that people stop directly in front of attractions at a high probability
and therefore influence other individuals also to stop or keep on walking.
This random effect will be included as a stochastic process which
is dependent on the position of the pedestrians.
The main difference to existing pedestrian models with random effects is the motivation mentioned above and the mathematical 
representation of the stochastic dynamics.  
Similar problems have been considered in e.g.~\cite{Degond2013, Etikyala2014, Klar_2014}, where
a formulation of the dynamical system in terms of Brownian motion driven stochastic differential equations (SDEs) 
has been used to capture intrinsic random decisions in the velocity of pedestrians. 
However, this approach is not suitable to cover the stop-and-go behavior of pedestrians we have in mind. 
We will use a continuous-time Markov chain approach \cite{DegondRinghofer2007} to include the random stop-and-go behavior
by considering location-dependent switching rates. The overall goal is then to derive  
a model hierarchy for the microscopic and macroscopic pedestrian flow. 
In a first step, we develop and define the stochastic microscopic pedestrian model based on the social force model 
by Helbing and Moln\'{a}r~\cite{HelbingMolnar1998} and prove its existence, see section \ref{sec:2}. 
Then, the kinetic formulation of the microscopic model 
under the molecular chaos assumption in terms of measures is considered. The derivation   
of a reasonable closure function to state the associated macroscopic pedestrian model is non-standard and requires
some computational effort.
Section \ref{sec:3} deals with the numerical treatment of the microscopic and macroscopic equations. We present a stochastic simulation algorithm for the microscopic model and explain how numerical methods for hyperbolic conservation laws can be used to approximate the solution of the macroscopic model. 
For simulation purposes we present two examples in section \ref{sec:4} and 
compare qualitatively both approaches by defining reasonable performance measures. 
An additional experiment with a deterministic situation, i.e., no random velocity drops to zero,
will be also discussed. 

The following picture \ref{fig:Overview1} summarizes the main properties of the deterministic and stochastic modeling
approach for different levels of descriptions. It intends to give an idea on the main ingredients used throughout the sections
\ref{sec:2} to \ref{sec:4}.

\begin{figure}[htb!]
\centering
\begin{tikzpicture}[font=\footnotesize,every node/.style={node distance=2cm},%
    force1/.style={draw, fill=black!0,inner sep=2mm,text width=0.1\textwidth, align=flush center,%
    minimum height=1.2cm, font=\bfseries\footnotesize},
    force2/.style={draw, fill=black!0,inner sep=2mm,text width=0.37\textwidth, align=flush center,%
    minimum height=1.2cm, font=\bfseries\footnotesize},
    force3/.style={draw, fill=black!0,inner sep=2mm,text width=0.37\textwidth, align=flush center,%
    minimum height=1.2cm, font=\bfseries\footnotesize},
    force4/.style={draw, fill=black!0,inner sep=2mm,text width=0.37\textwidth, align=flush center,%
    minimum height=2cm, font=\footnotesize},
    force5/.style={draw, fill=black!0,inner sep=2mm,text width=0.37\textwidth, align=flush center,%
    minimum height=3cm, font=\footnotesize}]


  \node [force5](detmac){\vspace{0mm}
\begin{compactitem}
\item Scalar conservation law with advection part only
\item One velocity model: Pedestrians always move with the closure velocity \\
\end{compactitem}  
  };
  \node [force5,right=.5cm of detmac](stochmac){\vspace{0mm}
\begin{compactitem}
\item System of conservation laws with advection and reaction part 
\item Two velocities model: Pedestrians may change their velocities (stop or go) 
\end{compactitem}  
  };

  \node [force4, above = .0cm of detmac](detmic){\vspace{0mm}
\begin{compactitem}
\item Variables: position and velocities
\item Time-continuous Newton-type dynamics
\end{compactitem}  
  };
  \node [force4,right=.5cm of detmic](stochmic){\vspace{0mm}
\begin{compactitem}
\item Variables: position, velocities and status (stop or go)
\item Time-discrete stochastic process
\end{compactitem}  
  };

  \node [force2, above of=detmic](det){Deterministic};
  \node [force3, above of=stochmic](stoch){Stochastic};
  
  \node [force1, left=.5cm of detmic](mic){Micro{\-}scopic};
  \node [force1, left=.5cm of detmac](mac){Macro{\-}scopic};
  
  
  \path[-]
;
\end{tikzpicture}
\caption{Overview of the deterministic and stochastic model hierarchy}
\label{fig:Overview1}
\end{figure}
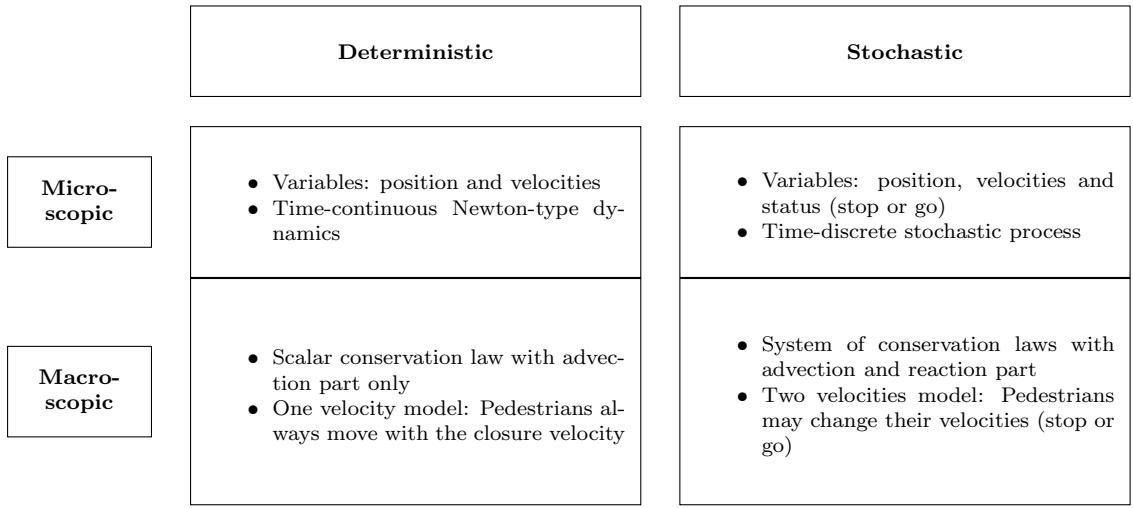

\section{The Models}
\label{sec:2}
In this section, we derive a stochastic microscopic model and deduce a corresponding macroscopic model. First, we define the stochastic microscopic model based on a deterministic social force model~\cite{HelbingMolnar1998} and show its well-posedness. 
Second, we determine the time-evolution of the microscopic model and use a mean field assumption to get a kinetic model. 
This model is then simplified by an appropriate closure assumption to develop a macroscopic model, cf. 
existing literature on particle systems, e.g.\ \cite{DiluteGases,JABIN_2002, JuengelSemiconductor,Klar_2014}
and pedestrian flow models in e.g.\ \cite{Degond2013}.

\subsection{Microscopic Model}
We use the ideas introduced by Helbing and Moln\'{a}r \cite{HelbingMolnar1998}
to describe the dynamical behavior of pedestrians. Let $N \in \N$ be the number of pedestrians and let 
\[x_i(t) = (x_i^{(1)}(t),x_i^{(2)}(t)) \in \R^2\] 
the position of pedestrian $i$ at time $t \in \R_{\geq 0}$. The velocity of pedestrian $i$ at time $t$ is given by 
\[\dot{x}_i(t) = v_i(t)=(v_i^{(1)}(t),v_i^{(2)}(t))  \in \R^2.\] In the following, we denote by
\[\vec{x}(t) = (x_1(t),\dots,x_N(t)) \in \R^{2N}\] and \[\vec{v}(t) = (v_1(t),\dots,v_N(t)) \in \R^{2N}\] the states of the system consisting of $N$ pedestrians.
The model equations are of Newton-type dynamics
\begin{align}
\dot{x}_i(t) &= V(x_i(t),v_i(t)),\notag \\
\dot{v}_i(t) &= F^{dest}(x_i(t),v_i(t))+F_i^{int}(\vec{x}(t),\vec{v}(t)) \label{eq:DetNewtonian}
\end{align}
with initial positions $\vec{x}(0) = \vec{x}_0 \in \R^{2N}$ and velocities $\vec{v}(0) = \vec{v}_0 \in \R^{2N}$.
The acting forces are divided into the destination force $F^{dest}$ and the interaction force $F_i^{int}$. 
The determination of boundary and obstacles forces is done using the function $V$ adapted from swarming models \cite{Armbruster2016_1,Armbruster2016_2}. We remark that the equations~\eqref{eq:DetNewtonian} are non-standard in the sense
that the first equation on $\dot{x}_i(t)$ is not dependent on the velocity $v_i(t)$ only. The motivation of 
$V(x_i(t),v_i(t))$ will be explained more detailed in the paragraph on obstacle forces.

To keep the notation and calculations well-arranged, we keep the basic model simple and focus on the new modeling ideas.

\subsubsection*{Interaction and Destination Forces}
As in \cite{HelbingMolnar1998}, the interaction force acting on pedestrian $i$ is
 \[F_i^{int}(\vec{x}(t),\vec{v}(t)):= \frac{1}{N-1}\sum_{\substack{j=1\\ j\neq i}}^N G(x_i(t)-x_j(t)),\] where $G\colon \R\to \R^2$ is a given vector field describing the repulsion and attraction.
\\
Let $v^C>0$ be the comfort speed which is achieved approximately in the relaxation time $\tau >0$. We define the destination force by
\[F^{dest}(x_i(t),v_i(t)):= \frac{1}{\tau}\left(v^C D(x_i(t))-v_i(t)\right),\]
where $D \colon \R^2 \to \R^2$ describes the direction to the destination of a pedestrian at position $x_i(t)$. If $x^D \in \R^2$ is some destination point, then $D$ can be expressed in the simplest case by \[D(x) = \frac{x^D-x}{||x^D-x||}.\]
Note that alternative destination directions could be achieved by considering the shortest path between the current position $x$ and the destination $x^D$ by solving the Eikonal equation, see e.g. \cite{Etikyala2014}.

\subsubsection*{Obstacle Forces}
Different to \cite{HelbingMolnar1998}, we choose an alternative way to model the obstacle forces by applying
a kind of specular reflection, see \cite{Armbruster2016_1, Armbruster2016_2}. 
The reason for this choice is that singular forces can occur close to obstacles leading to
numerical difficulties during the simulation of~\eqref{eq:DetNewtonian}. 

Let $\Gamma \subset \R^2$ be a domain with outer normal unit field $\vec{n}$ which is continuous and defined on $\overline{\Gamma}$.
The boundary $\partial \Gamma$ describes walls as well as obstacles and the outer normal field allows to manipulate the velocity vector such that pedestrians walk along these boundaries.
We assume a pedestrian very close to a wall at position $x \in \Gamma$ having a velocity vector $v \in \R^2$ which is given by the destination and interaction forces. The position at a later time is then given by $x+\Delta t v$ and might be outside of the domain, see figure \ref{fig:VeloBound}. 

	\begin{figure}
	\subfloat{
		\begin{tikzpicture}[scale = 0.95]
		\begin{axis}[grid=none,
          xmax=1.5,ymax=1.5,xmin=-0.5,ymin=-0.5,
          axis lines=middle,
          restrict x to domain= 0:1]
		\addplot[black,domain=0:1,samples=100] {sqrt(1-x^2)};
		\addplot[black,dashed,domain=0:0.8,samples = 80] {sqrt(0.8^2-x^2};

%
		\draw[->,thick] (0.6,0.6)--(0.85,0.7);
		\draw (0.89,0.71) node{$v$};
		\filldraw[black] (0.6,0.6) circle (2pt) node[align=right,below]{$x$};

		\end{axis}
	\end{tikzpicture}
	}
	\subfloat{
	\begin{tikzpicture}[scale = 0.95]
		\begin{axis}[grid=none,
          xmax=1.5,ymax=1.5,xmin=-0.5,ymin=-0.5,
          axis lines=middle,
          restrict x to domain= 0:1]
		\addplot[black,domain=0:1,samples=100] {sqrt(1-x^2)};
		\addplot[black,dashed,domain=0:0.8,samples = 80] {sqrt(0.8^2-x^2};	
	    \draw[->,thick] (0.6,0.6)--(0.8412,0.4804);
	    \draw (0.92,0.48) node{$v^\ast$};
		\filldraw[black] (0.6,0.6) circle (2pt) node[align=right,below]{$x$};
				\draw[->,thick,black,dashed] (0.6,0.6)--(0.85,0.7);
				\draw (0.89,0.71) node{$v$};
				\addplot[black,dashed,domain=0:1,samples=100] {x};
				\addplot[black,dashed,domain=0.5:1,samples = 100] {sqrt(1/2)-(x-sqrt(1/2))};
				\draw[->,thick,black,dashed] (0.6,0.6)--(0.7904,0.4096);
				\draw (0.83,0.35) node{$\tilde{v}$};

		\end{axis}
	\end{tikzpicture}
	}
	\caption{Velocity vector at the boundary}
	\label{fig:VeloBound}
	\end{figure}
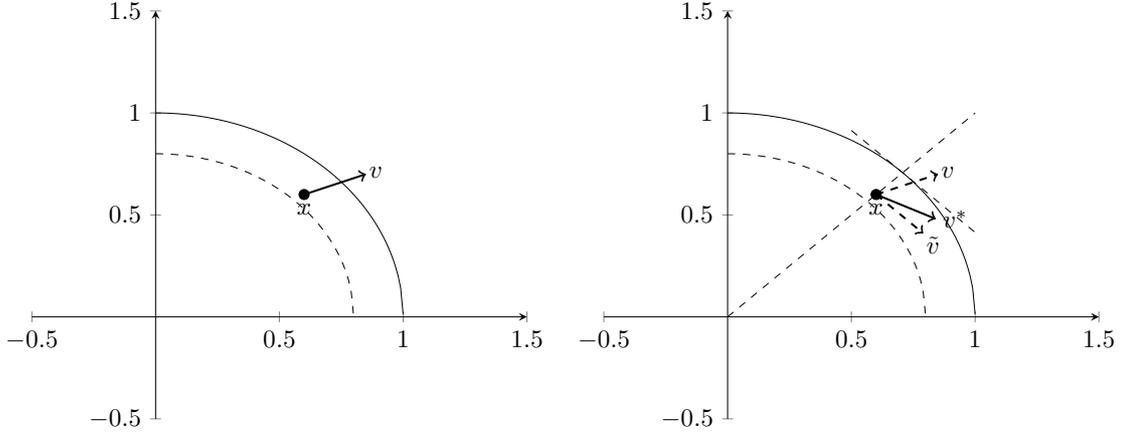	

This is the case if the vector $v$ points out of the domain, i.e.,\ $v\cdot \vec{n} \geq 0$. 
In the other case $v\cdot \vec{n} < 0$, the pedestrian walks into the domain and the velocity vector is admissible.
Let $d(x,\partial \Gamma)$ be the distance to the boundary and $U_\epsilon = \{x \in \R^2 \colon d(x,\partial \Gamma)\leq \epsilon\}$  the zone, where pedestrians feel uncomfortable close to a wall or obstacle.
We avoid the collision with the boundary by an orthogonal projection of the point $x+\Delta t v$ onto the tangent space at $x$ spanned by the orthogonal vector
\[\vec{n}^\perp (x) =(-n_2(x),n_1(x))^T\] 
to the outer unit normal vector. 
For the projected point, it follows
\[\Pi(x+\Delta t v) = x+ \Delta t v\cdot \vec{n}^\perp (x)\vec{n}^\perp (x)\]
and to conserve the velocity by the projection, we set 
\[\tilde{v}(v,x) = ||v||\frac{v\cdot \vec{n}^\perp(x)}{|v\cdot \vec{n}^\perp(x)|}\vec{n}^\perp(x) = ||v||\sgn{(v\cdot \vec{n}^\perp(x))}\vec{n}^\perp(x)\]
such that $||\tilde{v}(v,x)|| = ||v||$ holds.
Due to the assumption that a pedestrian will not change the direction immediately at the obstacle, we take a sufficiently smooth increasing function $J\colon [0,\infty)\to [0,1]$ with $J(0) = 0$ and $J(1) = 1$ and define
\[v^\ast(v,x) = \tilde{v}(v,x)+J\left(\frac{d(x,\partial \Gamma)}{\epsilon}\right)(v-\tilde{v}(v,x))\]
Summarizing, the velocity vector is given by
\[V(x,v):= 
\begin{cases}
v &\text{ if } v\cdot \vec{n}(x) < 0 \text{ and }\\
v^\ast(v,x) \frac{||v||}{||v^\ast(v,x)||}&\text{ if } v \cdot \vec{n}(x) \geq 0
\end{cases}
 \]
and one can easily check that $||V(x,v)|| = ||v||$. 
 
\subsubsection*{Random Velocity Effects}
All forces (interaction, destination and obstacle) considered so far are purely deterministic
and include no random effects. 
Different to pedestrian models modeled by SDEs, we choose an alternative way to include random effects. From phenomenological observations we deduce the intrinsic random effect that pedestrians can decide to walk or to stop independently of each other. 
This can be done by switching the velocity and acceleration to zero and back again. 
More precisely, an individual that stops immediately will keep on walking again after a random time.  

To include this phenomenon mathematically we describe the positions, velocities and the status, i.e.,\, stop or go, as a time discrete stochastic process. Let $\Delta t>0$ be a small fixed time stepsize and let $x_i^n \in \R^2$ the position, $v_i^n \in \R^2$ the velocity and $r_i^n \in \{0,1\}$ the status of pedestrian $i$ at time $t_n = n \Delta t$ for $n \in \N_0$. 

To ease the notation, we denote by $F_i = F^{dest}+F_i^{int}$ the sum of forces acting on pedestrian $i$. 

We consider a stochastic process $X = (X^n,n\in \N_0)$ with $X^n = (\vec{x}^n,\vec{v}^n,\vec{r}^n) \in E^{(N)} = \R^{2N}\times \R^{2N}\times \{0,1\}^N$ on some probability space $(\Omega,\mathcal{A},P)$ such that 
\begin{align}
P(x_i^{n+1} &= x_i^n+r_i^n\Delta t  V(x_i^n,v_i^n)|X^n) =1,\label{eq:x}\\
P(v_i^{n+1} &= v_i^n + \Delta t  F_i(\vec{x}^n,\vec{v}^n)|X^n)= r_i^n,\label{eq:v1}\\
P(v_i^{n+1} &= 0|X^n) =1-r_i^n,\label{eq:v2} \text{ and }\\
P(r_i^{n+1} &= z|X^n) = \Ind_z(r_i^n)(1-\Delta t \lambda(r_i^n,x_i^n))+\Ind_{1-z}(r_i^n)\Delta t \lambda(r_i^n,x_i^n) \label{eq:r}
\end{align}
for all $i = 1,\dots,N$, $n\in \N_0$, $z\in \{0,1\}$ and some rate function $\lambda \colon \{0,1\}\times \R^2 \to \R_{\geq 0}$.
The rate function $\lambda$ describes the expected events per unit time and is a given parameter. To avoid doubling effects by the interaction forces and the effects arising from the rate function $\lambda$, the rate function is assumed to be only dependent on the position of the pedestrian $i$ itself. 
Equations \eqref{eq:x}--\eqref{eq:v2} can be seen as an Euler approximation of the deterministic system \eqref{eq:DetNewtonian} while equation \eqref{eq:r} describes the probability to walk ($z=1$) or to stop ($z=0$) during the next time step $t_{n+1}$ 
given the values of the process at time $t_{n}$. 
This idea is adapted from \cite{DegondRinghofer2007}, where 
a continuous-time Markov Chains is used to motivate a production model with random breakdowns. 

Note that for the approach~\eqref{eq:x}--\eqref{eq:r}, 
we can only expect a well-defined model if $\Delta t \lambda(r,x) \in [0,1]$. Therefore we assume $\lambda$ to be uniformly bounded and $\Delta t \leq ||\lambda||_\infty^{-1}$. 
Furthermore, we have to fix an initial probability measure $\mu^0_{(N)}$ on the measurable space $(E^{(N)},\mathcal{E}^{(N)})$, where we denote by $\mathcal{E}^{(N)} = \sigma(E^{(N)})$ the smallest $\sigma$-algebra containing $E^{(N)}$.
Summarizing, we state the following definition.

\begin{definition}
A time discrete stochastic process $X$ on some probability space $(\Omega,\mathcal{A},P^{\mu_{(N)}^0})$ with values in $E^{(N)} = \R^{2N}\times \R^{2N}\times \{0,1\}^N$ and initial measure $\mu_{(N)}^0$ satisfying equations \eqref{eq:x}--\eqref{eq:r} is called a \emph{stochastic microscopic pedestrian model}.
\end{definition}

Now, we will show the well-posedness of the model. We define the mapping $U \colon E^{(N)}\times \mathcal{E}^{(N)}\to \R_{\geq 0}$ by 
\begin{align}
U((\vec{x},\vec{v},\vec{r}),B) := \sum_{\vec{z}\in \{0,1\}^N}\left[\epsilon_{h(\vec{x},\vec{v},\vec{r},\vec{z})}(B)\prod_{i=1}^N\left(\Ind_{z_i}(r_i)(1-\Delta t \lambda(r_i,x_i))+\Ind_{1-z_i}(r_i)\Delta t \lambda(r_i,x_i)\right)\right] \label{eq:U}
\end{align} 
with vector valued function 
\[h(\vec{x},\vec{v},\vec{r},\vec{z})=
\begin{pmatrix}
&x_1+\Delta t r_1 V(x_1,v_1)\\
&\vdots \\
&x_N+\Delta t r_N V(x_N,v_N)\\
&r_1(v_1+\Delta t F_1(\vec{x},\vec{v}))\\
&\vdots\\
&r_N(v_N+\Delta t F_N(\vec{x},\vec{v}))\\
&\vec{z}
\end{pmatrix}.
\]
Then, well-posedness can be obtained using the Markovian kernel property.

\begin{theorem}\label{thm:existence_micro}
Let $\lambda,V,F^{dest},F_i^{int}, i=1,\dots,N$ be measurable mappings, $\lambda$ uniformly bounded 
and $\Delta t \leq ||\lambda||_\infty^{-1}$. Then, the mapping $U$ defined by \eqref{eq:U} is a Markovian kernel on $(E^{(N)},\mathcal{E}^{(N)})$.
\end{theorem}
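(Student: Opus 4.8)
The plan is to verify the two defining properties of a Markovian (stochastic) kernel: (i) for every fixed state $(\vec{x},\vec{v},\vec{r})\in E^{(N)}$ the set function $B\mapsto U((\vec{x},\vec{v},\vec{r}),B)$ is a probability measure on $\mathcal{E}^{(N)}$, and (ii) for every fixed $B\in\mathcal{E}^{(N)}$ the map $(\vec{x},\vec{v},\vec{r})\mapsto U((\vec{x},\vec{v},\vec{r}),B)$ is $\mathcal{E}^{(N)}$-measurable.

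For (i) I would first abbreviate the single-particle weights
\[ p_i(z_i) := \Ind_{z_i}(r_i)\bigl(1-\Delta t\,\lambda(r_i,x_i)\bigr) + \Ind_{1-z_i}(r_i)\,\Delta t\,\lambda(r_i,x_i), \]
so that $U((\vec{x},\vec{v},\vec{r}),\cdot)$ is a finite combination of Dirac measures,
\[ U((\vec{x},\vec{v},\vec{r}),\cdot)\;=\;\sum_{\vec{z}\in\{0,1\}^N}\Bigl(\prod_{i=1}^N p_i(z_i)\Bigr)\,\epsilon_{h(\vec{x},\vec{v},\vec{r},\vec{z})}(\cdot). \]
Each $\epsilon_{h(\cdots)}$ is a probability measure, so a finite combination with nonnegative coefficients is $\sigma$-additive, and the claim reduces to checking that the weights are nonnegative and sum to one. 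Nonnegativity is exactly where the hypotheses enter: $\Delta t\,\lambda(r_i,x_i)\geq 0$ because $\lambda\geq 0$, while $1-\Delta t\,\lambda(r_i,x_i)\geq 0$ because $\Delta t\,\lambda(r_i,x_i)\leq \Delta t\,\|\lambda\|_\infty\leq 1$. For the normalization I would exploit the product structure,
\[ \sum_{\vec{z}\in\{0,1\}^N}\prod_{i=1}^N p_i(z_i)\;=\;\prod_{i=1}^N\bigl(p_i(0)+p_i(1)\bigr), \]
and for each $i$ use the identity $\Ind_0(r_i)+\Ind_1(r_i)=1$ (valid since $r_i\in\{0,1\}$) to obtain $p_i(0)+p_i(1)=\bigl(1-\Delta t\,\lambda(r_i,x_i)\bigr)+\Delta t\,\lambda(r_i,x_i)=1$. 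Hence the total mass equals $1$, and $U((\vec{x},\vec{v},\vec{r}),\cdot)$ is a probability measure.

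For (ii) I would observe that for fixed $B$ and fixed $\vec{z}$ the function $(\vec{x},\vec{v},\vec{r})\mapsto \epsilon_{h(\vec{x},\vec{v},\vec{r},\vec{z})}(B)=\Ind_B\bigl(h(\vec{x},\vec{v},\vec{r},\vec{z})\bigr)$ is measurable, being the composition of the indicator $\Ind_B$ with the map $h(\cdot,\vec{z})$; the latter is measurable since each component is assembled from the measurable functions $V$, $F^{dest}$, $F_i^{int}$ (hence $F_i$) and the coordinate projections by products and sums, all of which preserve measurability. Likewise each weight $p_i(z_i)$ is measurable in $(x_i,r_i)$ because $\lambda$ is measurable. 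Since finite products and finite sums of measurable functions remain measurable, $U(\cdot,B)$ is measurable.

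I expect no serious obstacle here: the argument is essentially bookkeeping. The one genuinely load-bearing point is the normalization, whose success hinges on the telescoping identity $p_i(0)+p_i(1)=1$ together with the standing assumption $\Delta t\,\|\lambda\|_\infty\leq 1$ guaranteeing nonnegativity of the weights; without the latter the coefficients could leave $[0,1]$ and $U$ would fail to be a probability kernel. The measurability part is then routine once one records that $h(\cdot,\vec{z})$ inherits measurability from its constituent forces.
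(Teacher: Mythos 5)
Your proposal is correct and follows essentially the same route as the paper: decompose $U(x,\cdot)$ as a finite nonnegative combination of Dirac measures, use $\Delta t\,\|\lambda\|_\infty\leq 1$ for nonnegativity, verify total mass one (the paper peels off the sums over $z_N,\dots,z_1$ one at a time, which is the same computation as your factorization $\sum_{\vec z}\prod_i p_i(z_i)=\prod_i(p_i(0)+p_i(1))$), and obtain measurability of $x\mapsto U(x,B)$ from measurability of $h$ and $\lambda$ together with closure under finite sums and products. No gaps.
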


\begin{proof}
For simplicity, we identify $x$ as $(\vec{x},\vec{v},\vec{r})$ in the following. 
The first step of the proof is to show that 
for every set $B\in \mathcal{E}^{(N)}$ the mapping $x \mapsto U(x,B)$ is measurable. 
In a second step, we have to prove that for every $x \in E^{(N)}$ the mapping $B \mapsto U(x,B)$ is a probability measure on $(E^{(N)},\mathcal{E}^{(N)})$.
From the assumptions we get that $h$ is a measurable mapping and hence $x\mapsto \epsilon_{h(x)}(B)$ is measurable. 
The function $\lambda$ is also measurable and the sum as well as the product of \eqref{eq:U} are finite such that $U(x,B)$ is measurable in $x$.

The mapping $U$ consists of a finite sum of weighted Dirac measures on $(E^{(N)},\mathcal{E}^{(N)})$ and the assumption $0<\Delta t \leq ||\lambda||_\infty^{-1}$ implies that weights must be non-negative. 
So $U$ is a measure on $(E^{(N)},\mathcal{E}^{(N)})$ and it remains to show that $U(x,E^{(N)}) = 1$ to complete the proof. We calculate
\begin{align*}
U(x,E^{(N)}) &=  \sum_{z\in \{0,1\}^N}\left[\prod_{i=1}^N\left(\Ind_{z_i}(r_i)(1-\Delta t \lambda(r_i,x_i))+\Ind_{1-z_i}(r_i)\Delta t \lambda(r_i,x_i)\right)\right]\\
	&= \sum_{z_1=0}^1 \dots \sum_{z_{N-1}=0}^1 \prod_{i=1}^{N-1}\left(\Ind_{z_i}(r_i)(1-\Delta t \lambda(r_i,x_i))+\Ind_{1-z_i}(r_i)\Delta t \lambda(r_i,x_i)\right)\\
	&\; \cdot \sum_{z_N=0}^1\left(\Ind_{z_N}(r_N)(1-\Delta t \lambda(r_N,x_N))+\Ind_{1-z_N}(r_N)\Delta t \lambda(r_N,x_N)\right)\\
	&= \sum_{z_1=0}^1 \dots \sum_{z_{N-1}=0}^1 \prod_{i=1}^{N-1}\left(\Ind_{z_i}(r_i)(1-\Delta t \lambda(r_i,x_i))+\Ind_{1-z_i}(r_i)\Delta t \lambda(r_i,x_i)\right)\cdot 1\\
	&= \dots \\
	&= 1
\end{align*}
and conclude that $U$ is a probability measure in the second component.
\end{proof}

\noindent The mapping $U$ describes the one step transition probability of the system and allows to define a Markovian semigroup of kernels. We choose the typical composition of Markovian kernels, see e.g.\,\cite{BauerWTengl}, 
\begin{align}
U \circ U (x,B) &:= \int_E U(x,dy)U(y,B) \label{eq:Uop}
\end{align}
and define 
\begin{align}
U^0 &:= \operatorname{Id}\label{eq:UHG1}\\
U^n &:= \underset{k=1}{\overset{n}{\circ}}\; U \text{ for } n\in \N.\label{eq:UHG2}
\end{align}
The family $(U^n,n\in \N_0)$ is then a normal semigroup of Markovian kernels and we get the following
well-posedness result.

\begin{theorem}
Let $\lambda,V,F^{dest},F_i^{int}, i=1,\dots,N$ be measurable mappings, $\lambda$ uniformly bounded, $\Delta t \leq ||\lambda||_\infty^{-1}$ and $\mu^0_{(N)}$ a probability measure on $(E^{(N)},\mathcal{E}^{(N)})$. Then, there exists a \emph{stochastic microscopic pedestrian model}. Furthermore, the distribution of $N$ pedestrians at time step $t_n$ is given by
\begin{align}
\mu^n_{(N)}(B) = \int_{E^{(N)}} U^n(x,B)\mu^0_{(N)}(dx) \label{eq:mu^n}
\end{align}
for every $B \in \mathcal{E}^{(N)}$.
\end{theorem}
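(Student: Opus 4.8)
The plan is to construct the process canonically from the one-step kernel $U$ and then read off that the defining equations hold. Since Theorem \ref{thm:existence_micro} guarantees that $U$ is a Markovian kernel on $(E^{(N)},\mathcal{E}^{(N)})$, the family of time-homogeneous transition kernels $(U,U,\dots)$ together with the initial law $\mu^0_{(N)}$ satisfies the hypotheses of the Ionescu--Tulcea extension theorem. First I would apply that theorem on the canonical path space $\Omega = (E^{(N)})^{\N_0}$ equipped with the product $\sigma$-algebra $\mathcal{A} = (\mathcal{E}^{(N)})^{\otimes \N_0}$ and the coordinate projections $X^n(\omega) = \omega_n$. This yields a probability measure $P^{\mu^0_{(N)}}$ under which $X = (X^n,n\in\N_0)$ is a Markov chain with initial distribution $\mu^0_{(N)}$ and one-step transition $P(X^{n+1}\in B \mid X^n) = U(X^n,B)$ almost surely. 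No Polish-space assumption is needed, which is exactly why Ionescu--Tulcea (rather than Kolmogorov's extension theorem) is the appropriate tool here.

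The second and most delicate step is to verify that this canonically constructed chain indeed obeys \eqref{eq:x}--\eqref{eq:r}. Here I would decode the structure of $U$ in \eqref{eq:U}. The Dirac mass $\epsilon_{h(\vec{x},\vec{v},\vec{r},\vec{z})}$ carries the deterministic part of the dynamics: its first block $x_i+\Delta t\, r_i V(x_i,v_i)$ gives \eqref{eq:x}, and its velocity block $r_i(v_i+\Delta t\, F_i(\vec{x},\vec{v}))$ gives \eqref{eq:v1} and \eqref{eq:v2}, since for $r_i^n\in\{0,1\}$ the value collapses to $v_i^n+\Delta t\, F_i$ with conditional probability $r_i^n$ and to $0$ with conditional probability $1-r_i^n$. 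To obtain \eqref{eq:r} I would marginalize $U(X^n,\cdot)$ onto the $i$-th status coordinate: summing the product weights over all $z_j$ with $j\neq i$ telescopes to $1$ exactly as in the proof of Theorem \ref{thm:existence_micro}, leaving the single factor $\Ind_z(r_i^n)(1-\Delta t \lambda(r_i^n,x_i^n))+\Ind_{1-z}(r_i^n)\Delta t \lambda(r_i^n,x_i^n)$, which is precisely \eqref{eq:r}. This confirms that $X$ is a stochastic microscopic pedestrian model in the sense of the definition.

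Finally I would establish the distribution formula \eqref{eq:mu^n}. By the semigroup construction \eqref{eq:UHG1}--\eqref{eq:UHG2} together with the composition rule \eqref{eq:Uop}, the kernels $(U^n,n\in\N_0)$ satisfy the Chapman--Kolmogorov relation $U^{n}=U^{n-1}\circ U$. The law of $X^n$ is the pushforward of $\mu^0_{(N)}$ under the $n$-step dynamics, and an induction on $n$---with trivial base case $\mu^0_{(N)}(B)=\int U^0(x,B)\,\mu^0_{(N)}(dx)$ since $U^0=\operatorname{Id}$, and inductive step using the Markov property and Tonelli's theorem to interchange the integrations---yields $\mu^n_{(N)}(B)=\int_{E^{(N)}} U^n(x,B)\,\mu^0_{(N)}(dx)$ for every $B\in\mathcal{E}^{(N)}$.

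I expect the main obstacle to be the bookkeeping in the second step: one must keep straight that the position and velocity updates are deterministic functions read off the Dirac component $h$ (and depend on the \emph{old} status $r_i^n$), whereas the only genuine randomness lives in the status coordinates. In particular the conditional statements \eqref{eq:v1}--\eqref{eq:v2}, whose right-hand sides are the data-dependent values $r_i^n$ and $1-r_i^n$, must be recovered correctly rather than being confused with unconditional marginals. The existence part via Ionescu--Tulcea and the inductive derivation of \eqref{eq:mu^n} are then routine.
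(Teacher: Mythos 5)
Your proposal is correct and follows essentially the same route as the paper: construct the canonical coordinate process from the kernel semigroup $(U^n)$ via an extension theorem, read off \eqref{eq:x}--\eqref{eq:r} from the structure of $U$, and obtain \eqref{eq:mu^n} from the Markov/semigroup property. The only (cosmetic) difference is that you invoke Ionescu--Tulcea where the paper uses Daniell--Kolmogorov on the Polish space $E^{(N)}$; your verification of the defining equations and the induction for \eqref{eq:mu^n} are in fact spelled out in more detail than the paper's, which checks only \eqref{eq:x} for $i=1$ and declares the rest analogous.
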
  

\begin{proof}
The state space $E^{(N)}$ is a polish space and we hence can use the Daniell-Kolmogorov theorem~\cite{BauerWTengl} which guarantees the existence of a canonical coordinate process $X = (X^n,n\in \N_0)$ on some probability space $(\Omega,\mathcal{A},P^{\mu^0_{(N)}})$ defined by the semigroup of Markovian kernels $(U^n, n\in \N_0)$, cf. \eqref{eq:UHG2}. The stochastic process $X$ is a Markov process and 
therefore \eqref{eq:mu^n} and
\[P^{\mu^0_{(N)}}(X^{n+1} \in B|X^n = x) = U(x,B)\]
are satisfied.
By setting $x = (\vec{x},\vec{v},\vec{r})$ and 
\[B = \{x_1+r_1 \Delta t  V(x_1,v_1)\}\times \R^{2(N-1)} \times \R^{2N} \times \{0,1\}^N \]
 we obtain equation \eqref{eq:x} in the case $i=1$. 
The remaining equations \eqref{eq:v1}--\eqref{eq:r} can be deduced analogously.
\end{proof}

\subsection{Kinetic Model}
We now analyze the evolution of measures given by the microscopic pedestrian flow model. The principal idea is to derive the Kolmogorov forward equation, see e.g.\ \cite{GikhmanStochProc2}, given by the semigroup $(U_n,n\in \N_0)$. 
To do so, we introduce the following spaces of test-functions $\mathcal{C}^{(N)}$ and $\mathcal{C}$ to capture the discrete states of the pedestrians:
\begin{eqnarray*}
\mathcal{C}^{(N)}&:=&\{\phi \colon E^{(N)} \to \R \colon \phi(\vec{x},\vec{v},\cdot) \text{ is } \mathcal{P}(\{0,1\}^N)/\mathcal{B}(\R)\text{-measurable}, \phi(\cdot,\cdot,\vec{r}) \in C_b^1(\R^{2N}\times \R^{2N};\R)\} \\ 
\mathcal{C}&:=&\{\psi \colon E \to \R \colon \psi(x,v,\cdot) \text{ is } \mathcal{P}(\{0,1\})/\mathcal{B}(\R)\text{-measurable}, \psi(\cdot,\cdot,r) \in C_b^1(\R^2\times \R^2;\R)\},
\end{eqnarray*}
where $E = \R^2 \times \R^2 \times \{0,1\}$.
Here, $\mathcal{P}$ denotes the power set and $\mathcal{B}(\R)$ the Borel $\sigma$-algebra on $\R$.
We make the following assumption: 
For every $n \in \N_0$ there exists a probability measure $\mu^n_{(1)} $ on $(E,\mathcal{E})$ such that 
for all $B = B^x_1\times \cdots \times B^x_N\times B^v_1\times \cdots \times B^v_N\times B^r_1\times \cdots \times B^r_N \in \mathcal{E}^{(N)}$ with $B_i^x \times B_i^v \times B_i^r \in \mathcal{E}, i=1,\dots,N$, we have
\begin{align}
\mu^n_{(N)} (B) = \prod_{i=1}^N \mu^n_{(1)}(B^x_i \times B^v_i \times B^r_i).\label{eq:MolChaos}
\end{align}
This is the so-called molecular chaos assumption which forces the $N$-particle distribution being the product measure of a one particle distribution. In fact, for existing models it is shown that this is true as $N$ tends to infinity, see e.g.\ \cite{Chen2016}.
To keep the derivation of the kinetic equation clearly arranged, we state the following lemma. 
\begin{lemma}\label{lem:OneStep}\hspace{0mm}

\begin{enumerate}
\item We have the representation
\begin{align*}
&\prod_{i=1}^N\left(\Ind_{z_i}(r_i)(1-\Delta t \lambda(r_i,x_i))+\Ind_{1-z_i}(r_i)\Delta t \lambda(r_i,x_i)\right)\\
=\; &\prod_{i=1}^N \Ind_{z_i}(r_i) (1-\Delta t \sum_{j=1}^N \lambda(r_j,x_j)) +\Delta t\sum_{j=1}^N \lambda(r_j,x_j)\Ind_{1-z_j}(r_j)\prod_{i\neq j}\Ind_{z_i}(r_i) + o(\Delta t) 
\end{align*}
as $\Delta t \to 0$.
\item
For every $\phi \in \mathcal{C}^{(n)}$ it holds
\begin{align*}
U\phi(\vec{x},\vec{v},\vec{r}) = \phi(\vec{x},\vec{v}\vec{r},\vec{r})&+\Delta t \sum_{j=1}^N \lambda(r_j,x_j)(\phi(\vec{x},\vec{v}\vec{r},\theta_j (\vec{r}))-\phi(\vec{x},\vec{v}\vec{r},\vec{r}))\\
	&+\Delta t \sum_{j=1}^N r_j(\nabla_{x_j}\phi(\vec{x},\vec{v}\vec{r},\vec{r})\cdot V(x_j,v_j)+\nabla_{v_j}\phi(\vec{x},\vec{v}\vec{r},\vec{r})\cdot F_j(\vec{x},\vec{v}))\\
	&+ o(\Delta t),
\end{align*}
where we set $\vec{v}\vec{r} = (v_1 r_1,\dots,v_N r_N)$ and $\theta_j(\vec{r}) = (r_1,\dots,r_{j-1},1-r_j,r_{j+1},\dots,r_N)$.
\end{enumerate}
\end{lemma}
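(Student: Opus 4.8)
For part (1) I would read the left-hand product as an independent choice in each coordinate: since $r_i,z_i\in\{0,1\}$, exactly one of $\Ind_{z_i}(r_i)$ and $\Ind_{1-z_i}(r_i)$ is nonzero, so the $i$-th factor equals $1-\Delta t\,\lambda(r_i,x_i)$ when $z_i=r_i$ and equals $\Delta t\,\lambda(r_i,x_i)$ when $z_i=1-r_i$. Expanding the product and grouping the terms by the ``flip set'' $S=\{i:z_i\neq r_i\}$, the contribution indexed by $S$ carries a prefactor $\Delta t^{|S|}$. The case $S=\emptyset$ forces $\vec z=\vec r$ and gives $\prod_i(1-\Delta t\,\lambda(r_i,x_i))=1-\Delta t\sum_j\lambda(r_j,x_j)+O(\Delta t^2)$, which is the first right-hand term once the factor $\prod_i\Ind_{z_i}(r_i)$ is reinstated. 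Each case $|S|=1$ singles out one index $j$ and yields $\Delta t\,\lambda(r_j,x_j)\prod_{i\neq j}(1-\Delta t\,\lambda(r_i,x_i))=\Delta t\,\lambda(r_j,x_j)+O(\Delta t^2)$, matching the sum over $j$ carrying $\Ind_{1-z_j}(r_j)\prod_{i\neq j}\Ind_{z_i}(r_i)$. All $S$ with $|S|\geq 2$ are $O(\Delta t^2)=o(\Delta t)$, and since $\lambda$ is uniformly bounded and the index set is finite, these remainders are uniformly $o(\Delta t)$.

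For part (2), I would exploit that $U((\vec x,\vec v,\vec r),\cdot)$ in \eqref{eq:U} is a finite weighted sum of Dirac masses located at the points $h(\vec x,\vec v,\vec r,\vec z)$, so that $U\phi(\vec x,\vec v,\vec r)=\sum_{\vec z\in\{0,1\}^N}\phi\bigl(h(\vec x,\vec v,\vec r,\vec z)\bigr)\,w(\vec z)$, where $w(\vec z)$ is precisely the product treated in part (1). Substituting the representation from part (1), the indicator products collapse the sum over $\vec z$: the factor $\prod_i\Ind_{z_i}(r_i)$ selects $\vec z=\vec r$, while for each $j$ the factor $\Ind_{1-z_j}(r_j)\prod_{i\neq j}\Ind_{z_i}(r_i)$ selects $\vec z=\theta_j(\vec r)$. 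This leaves $U\phi=\phi\bigl(h(\cdot,\vec r)\bigr)\bigl(1-\Delta t\sum_j\lambda(r_j,x_j)\bigr)+\Delta t\sum_j\lambda(r_j,x_j)\,\phi\bigl(h(\cdot,\theta_j(\vec r))\bigr)+o(\Delta t)$, the remainder staying $o(\Delta t)$ after summation because $\phi$ is bounded and the sum is finite.

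It then remains to expand $\phi$ at $h$ in $\Delta t$. Reading off $h$, the position argument is $\vec x+\Delta t\,(r_i V(x_i,v_i))_i$ and the velocity argument is $(r_i v_i+\Delta t\,r_i F_i)_i$, whose zeroth-order value is $\vec v\vec r$; this is exactly why the base point in the statement is $(\vec x,\vec v\vec r,\cdot)$. Since $\phi(\cdot,\cdot,\vec s)\in C_b^1$ for each fixed status $\vec s$ and the displacement directions do not depend on $\Delta t$, differentiability gives $\phi(h(\cdot,\vec s))=\phi(\vec x,\vec v\vec r,\vec s)+\Delta t\sum_i r_i\bigl(\nabla_{x_i}\phi\cdot V(x_i,v_i)+\nabla_{v_i}\phi\cdot F_i\bigr)+o(\Delta t)$ with derivatives taken at $(\vec x,\vec v\vec r,\vec s)$. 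I would apply this first-order expansion to the $\vec z=\vec r$ term and use only the zeroth order for the $\Delta t$-prefactored switching terms, where anything higher is already $o(\Delta t)$. Collecting the pieces, the $-\Delta t\sum_j\lambda(r_j,x_j)\,\phi(\vec x,\vec v\vec r,\vec r)$ coming from the first summand combines with the switching sum into $\Delta t\sum_j\lambda(r_j,x_j)\bigl(\phi(\vec x,\vec v\vec r,\theta_j(\vec r))-\phi(\vec x,\vec v\vec r,\vec r)\bigr)$, while the expansion of the first summand supplies the transport term, giving the claimed identity after relabeling $i\leftrightarrow j$.

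The main obstacle here is bookkeeping rather than conceptual depth: two independent sources of $\Delta t$-dependence appear, namely the switching weights from part (1) and the deterministic drift inside the continuous arguments of $\phi$, and I must make sure that every product of the two (every cross term) together with every $|S|\geq 2$ contribution is genuinely $o(\Delta t)$ and not merely formally small. The uniform boundedness of $\lambda$, the finiteness of $\{0,1\}^N$, and the boundedness of $\phi$ and its first derivatives are exactly what let all these remainders be absorbed into a single $o(\Delta t)$ as $\Delta t\to 0$.
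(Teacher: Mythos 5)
Your proposal is correct and follows essentially the same route as the paper: part (1) by expanding the product and discarding all contributions of order $\Delta t^2$ (your ``flip set'' bookkeeping just makes the paper's one-line ``rearranging and collecting'' explicit), and part (2) by writing $U\phi$ as the finite weighted sum of point evaluations at $h(\vec x,\vec v,\vec r,\vec z)$, Taylor-expanding $\phi$ about $(\vec x,\vec v\vec r,\vec z)$, and letting the indicator products from part (1) collapse the sum over $\vec z$ to the terms $\vec z=\vec r$ and $\vec z=\theta_j(\vec r)$. The only difference is the order in which you interchange the Taylor expansion and the collapse of the sum, which is immaterial.
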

\begin{proof}
The first part of the lemma directly follows by rearranging and collecting the $o(\Delta t)$ terms. 
Thus, it remains to show the second part. We have
\begin{align}
U\phi(\vec{x},\vec{v},\vec{r}) &:= \int_{E^{(N)}} \phi(\vec{y},\vec{w},\vec{s}) U((\vec{x},\vec{v},\vec{r}),d(\vec{y},\vec{w},\vec{s}))\notag \\
	&= \sum_{\vec{z} \in \{0,1\}^N} \phi(h(\vec{x},\vec{v},\vec{r},\vec{z}))\prod_{i=1}^N\left(\Ind_{z_i}(r_i)(1-\Delta t \lambda(r_i,x_i))+\Ind_{1-z_i}(r_i)\Delta t \lambda(r_i,x_i)\right). \label{eq:251}
\end{align}
If we apply a Taylor expansion to $\phi(h(\vec{x},\vec{v},\vec{r},\vec{z}))$, this reads
\begin{align}
\phi(h(\vec{x},\vec{v},\vec{r},\vec{z})) =&\; \phi(\vec{x},\vec{v}\vec{r},\vec{z})\notag\\
&\;+ \Delta t \sum_{j=1}^N r_j(\nabla_{x_j}\phi(\vec{x},\vec{v}\vec{r},\vec{z})\cdot V(x_j,v_j)+\nabla_{v_j}\phi(\vec{x},\vec{v}\vec{r},\vec{z})\cdot F_j(\vec{x},\vec{v}))\notag\\
&\; + o(\Delta t).\label{eq:252}
\end{align}
Inserting \eqref{eq:252} into \eqref{eq:251} and using the first part of the lemma we end up with
\begin{align*}
U\phi(\vec{x},\vec{v},\vec{r}) =&\; \phi(\vec{x},\vec{v}\vec{r},\vec{r})\\
&\;-\Delta t \sum_{j=1}^N \lambda(r_j,x_j)\phi(\vec{x},\vec{v}\vec{r},\vec{r})\\
&\;+\Delta t \sum_{j=1}^N \lambda(r_j,x_j)\phi(\vec{x},\vec{v}\vec{r},\theta_j (\vec{r}))\\
&\;+\Delta t \sum_{j=1}^N r_j(\nabla_{x_j}\phi(\vec{x},\vec{v}\vec{r},\vec{r})\cdot V(x_j,v_j)+\nabla_{v_j}\phi(\vec{x},\vec{v}\vec{r},\vec{r})\cdot F_j(\vec{x},\vec{v}))\\
	&+ o(\Delta t).
\end{align*}
\end{proof}

Furthermore, to state the discrete time evolution of measures, we have to define a consistency relation.
\begin{definition}
The consistency relation for the initial measure $\mu_{(N)}^0$ is defined by
\begin{align}
\int_{E^{(N)}} \phi(\vec{x},\vec{v} \vec{r},\vec{r}) \mu_{(N)}^0(d(\vec{x},\vec{v},\vec{r})) = \int_{E^{(N)}} \phi(\vec{x},\vec{v},\vec{r}) \mu_{(N)}^0(d(\vec{x},\vec{v},\vec{r})). \label{eq:consN}
\end{align}
\end{definition}
The relation \eqref{eq:consN} means that non-moving pedestrians at the initial time $t_0$ have zero velocity.
The next lemma shows that the consistency relation is also conserved in time.
\begin{lemma}\label{lem:consN}
Let the measure $\mu_{(N)}^0$ satisfy the consistency relation \eqref{eq:consN}, then it holds
\begin{align}
\int_{E^{(N)}} \phi(\vec{x},\vec{v} \vec{r},\vec{r}) \mu_{(N)}^n(d(\vec{x},\vec{v},\vec{r})) = \int_{E^{(N)}} \phi(\vec{x},\vec{v},\vec{r}) \mu_{(N)}^n(d(\vec{x},\vec{v},\vec{r})). \label{eq:consN2}
\end{align}
for every $n \in \N_0$.
\end{lemma}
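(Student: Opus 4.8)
The plan is to argue by induction on $n\in\N_0$. The base case $n=0$ is precisely the hypothesis \eqref{eq:consN}, so everything reduces to the inductive step, and the whole work lies in transporting the relation one time step forward through the kernel $U$.

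For the step I would first record the weak one-step relation dual to \eqref{eq:mu^n}: from the definition of $\mu_{(N)}^{n}$ together with the semigroup property \eqref{eq:UHG2} one obtains $\int_{E^{(N)}}\psi\,d\mu^{n+1}_{(N)}=\int_{E^{(N)}}U\psi\,d\mu^{n}_{(N)}$ for every bounded measurable $\psi$. Writing $M\phi(\vec x,\vec v,\vec r):=\phi(\vec x,\vec v\vec r,\vec r)$ for the velocity-masking map, the assertion \eqref{eq:consN2} at level $n+1$ is exactly $\int_{E^{(N)}}U(M\phi)\,d\mu^{n}_{(N)}=\int_{E^{(N)}}U\phi\,d\mu^{n}_{(N)}$. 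It therefore suffices to compare $U(M\phi)$ and $U\phi$ under the measure $\mu^{n}_{(N)}$.

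Next I would expand both sides with Lemma \ref{lem:OneStep}. The decisive structural feature is that in the representation of $U\phi$ every evaluation of $\phi$ already carries the masked velocity $\vec v\vec r$; since $r_i\in\{0,1\}$ gives $r_i^2=r_i$, the mask is idempotent, so the transport contributions $r_j(\nabla_{x_j}\phi\cdot V(x_j,v_j)+\nabla_{v_j}\phi\cdot F_j)$ and the leading term $\phi(\vec x,\vec v\vec r,\vec r)$ coincide for $U\phi$ and $U(M\phi)$. Consequently the two expressions can only disagree in the switching terms weighted by $\Delta t\,\lambda(r_j,x_j)$, where the status of a single pedestrian $j$ is flipped through $\theta_j(\vec r)$.

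The main obstacle is thus to show that these state-switching terms integrate to the same value against $\mu^n_{(N)}$. The delicate contribution comes from the go-to-stop flips ($r_j=1\mapsto 0$), where the two candidate velocity arguments of $\phi$ differ precisely in the $j$-th slot. Here I would invoke the inductive consistency hypothesis \eqref{eq:consN2} for $\mu^n_{(N)}$ to identify the velocity of a switching pedestrian with its masked value $v_jr_j$, which reconciles the arguments of $\phi$ after the flip and collapses the discrepancy. Carefully matching these velocity arguments across the status change — rather than the routine bookkeeping of the transport and diagonal terms — is the step that carries the whole proof, and it is exactly where both the idempotency $r_i^2=r_i$ and the level-$n$ relation \eqref{eq:consN2} are indispensable.
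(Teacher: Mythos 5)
Your induction scaffolding is sound: the base case is the hypothesis, the dual one-step relation $\int_{E^{(N)}}\psi\,d\mu^{n+1}_{(N)}=\int_{E^{(N)}}U\psi\,d\mu^{n}_{(N)}$ is correct, and the reduction to comparing $U(M\phi)$ with $U\phi$ under $\mu^n_{(N)}$ is the right target. You have also correctly isolated the only place where the two expansions from Lemma~\ref{lem:OneStep} can disagree, namely the switching terms. But the step you yourself identify as carrying the whole proof does not go through. For a go-to-stop flip one has $r_j=1$, and the two velocity arguments to be reconciled are $\vec v\vec r$ (whose $j$-th slot is $v_j$, since $r_j=1$) and $(\vec v\vec r)\theta_j(\vec r)$ (whose $j$-th slot is $v_jr_j(1-r_j)=0$). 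The inductive hypothesis \eqref{eq:consN2} at level $n$ only constrains the velocity of a pedestrian on the event $\{r_j=0\}$, where it forces $v_j=0$; on $\{r_j=1\}$ the identity $v_j=v_jr_j$ is a tautology and carries no information. So it cannot identify $\phi(\vec x,\vec v\vec r,\theta_j(\vec r))$ with $\phi(\vec x,(\vec v\vec r)\theta_j(\vec r),\theta_j(\vec r))$, and the discrepancy $\Delta t\sum_{j:\,r_j=1}\lambda(1,x_j)\bigl[\phi(\vec x,\vec v\vec r,\theta_j(\vec r))-\phi(\vec x,(\vec v\vec r)\theta_j(\vec r),\theta_j(\vec r))\bigr]$ survives. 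A secondary issue: Lemma~\ref{lem:OneStep} is only an expansion up to $o(\Delta t)$, so even if the switching terms matched, this route would prove \eqref{eq:consN2} at level $n+1$ only up to $o(\Delta t)$, not the exact identity the lemma asserts.

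The paper's argument is structured differently and avoids the expansion entirely: it shows that the mask can be absorbed into the \emph{starting point} of the kernel, i.e.\ $h(\vec x,\vec v\vec r,\vec r,\vec z)=h(\vec x,\vec v,\vec r,\vec z)$ (using $r_i^2=r_i$, $V(x_i,r_iv_i)=r_iV(x_i,v_i)$ and the affine dependence of $F^{dest}$ on $v_i$), and that the Bernoulli weights in \eqref{eq:U} do not depend on $\vec v$, whence $U((\vec x,\vec v\vec r,\vec r),B)=U((\vec x,\vec v,\vec r),B)$ for every $B$. Be aware that this exact identity is $(U\phi)\circ M=U\phi$ — the mask applied to the input of the kernel — which is not the same object as your $U(M\phi)$, where the mask sits inside the expectation; the two differ at order $\Delta t$ precisely because a moving pedestrian who flips to status $0$ carries the un-zeroed velocity $v_j+\Delta t\,F_j$ into the next state. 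So your gap is not a bookkeeping matter that more care would fix: the combination of Lemma~\ref{lem:OneStep} with the level-$n$ consistency relation cannot close the go-to-stop term, and you would need to argue at the level of the kernel itself, as the paper does, and then address explicitly how the masked test function (rather than the masked input) is handled.
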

\begin{proof}
The case $n = 0$ is true  by assumption. Let us further assume that \eqref{eq:consN2} is true for some arbitrary, fixed $n \in \N_0$, 
then due to \eqref{eq:mu^n}
\begin{align*}
\int_{E^{(N)}} \phi(\vec{x},\vec{v},\vec{r}) \mu_{(N)}^{n+1}(d(\vec{x},\vec{v},\vec{r})) = 
\int_{E^{(N)}} U\phi(\vec{x},\vec{v},\vec{r}) \mu_{(N)}^n(d(\vec{x},\vec{v},\vec{r})).
\end{align*}
It remains to show that
\begin{align*}
U\phi(\vec{x},\vec{v},\vec{r}) = U\phi(\vec{x},\vec{v} \vec{r},\vec{r}).
\end{align*}
We know that
\begin{align*}
V(x_i,v_i r_i) &= r_i V(x_i,v_i)
\end{align*}
and hence
\[h((\vec{x},\vec{v} \vec{r},\vec{r})) = h((\vec{x},\vec{v},\vec{r}))\] due to $r_i \in \{0,1\},$ i.e., $r_i^2 = r_i$.
Then it is straightforward to see \[U((\vec{x},\vec{v} \vec{r},\vec{r}),B) =  U((\vec{x},\vec{v},\vec{r}),B)\] and 
\begin{align*}
U\phi(\vec{x},\vec{v},\vec{r}) = U\phi(\vec{x},\vec{v} \vec{r},\vec{r}).
\end{align*}
\end{proof}

The discrete time evolution of measures can be finally summarized in the following theorem.

\begin{theorem}\label{thm:N_part_dist}
Let the consistency relation \eqref{eq:consN} be satisfied. Then, for all $\phi \in \mathcal{C}^{(N)}$, the $N$-particle distribution $(\mu_{(N)}^n, n\in \N_0)$ satisfies
\begin{align}
&\frac{1}{\Delta t}\left(\int_{E^{(N)}} \phi(\vec{x},\vec{v},\vec{r}) \mu^{n+1}_{(N)}(d(\vec{x},\vec{v},\vec{r}))-\int_{E^{(N)}} \phi(\vec{x},\vec{v},\vec{r}) \mu^{n}_{(N)}(d(\vec{x},\vec{v},\vec{r}))\right)\notag \\
=& \int_{E^{(N)}}\sum_{j=1}^N \Big[r_j\nabla_{x_j} \phi(\vec{x},\vec{v}\vec{r},\vec{r}) \cdot V(x_j,v_j)+r_j \nabla_{v_j} \phi(\vec{x},\vec{v} \vec{r},\vec{r}) \cdot F_j(\vec{x},\vec{v})\notag \\
&+\lambda(r_j,x_j)(\phi(\vec{x},\vec{v}\vec{r},\theta_j(\vec{r}))-\phi(\vec{x},\vec{v} \vec{r},\vec{r}))\Big]\mu^n_{(N)}(d(\vec{x},\vec{v},\vec{r}))\notag\\
&+o(1)\notag.
\end{align}
\end{theorem}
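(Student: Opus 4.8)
The plan is to reduce the assertion to the two preceding lemmas by exploiting the forward (Kolmogorov) relation between the measure family $(\mu^n_{(N)})$ and the one-step kernel $U$. First I would establish the one-step update $\mu^{n+1}_{(N)}(B) = \int_{E^{(N)}} U(y,B)\,\mu^{n}_{(N)}(dy)$: writing $\mu^n_{(N)}$ in the differential form of \eqref{eq:mu^n} and using the Chapman--Kolmogorov (semigroup) identity $U^{n+1}(x,\cdot)=\int U^n(x,dy)\,U(y,\cdot)$ coming from \eqref{eq:Uop}--\eqref{eq:UHG2}, the relation follows after interchanging the order of integration. Testing $\phi \in \mathcal{C}^{(N)}$ against it and applying Fubini then yields the duality
\[
\int_{E^{(N)}}\phi\,d\mu^{n+1}_{(N)} = \int_{E^{(N)}}U\phi\,d\mu^{n}_{(N)},
\]
so that, after subtracting $\int_{E^{(N)}}\phi\,d\mu^n_{(N)}$, the entire left-hand side of the theorem becomes $\tfrac{1}{\Delta t}\int_{E^{(N)}}(U\phi-\phi)\,d\mu^n_{(N)}$ and the problem reduces to the single integrand $U\phi-\phi$.

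Next I would substitute the one-step expansion from part (2) of Lemma \ref{lem:OneStep}, which decomposes $U\phi(\vec{x},\vec{v},\vec{r})$ into the leading term $\phi(\vec{x},\vec{v}\vec{r},\vec{r})$, a term of order $\Delta t$ containing precisely the transport contribution $r_j(\nabla_{x_j}\phi\cdot V(x_j,v_j)+\nabla_{v_j}\phi\cdot F_j)$ together with the jump difference $\lambda(r_j,x_j)(\phi(\cdot,\theta_j(\vec{r}))-\phi(\cdot,\vec{r}))$, and an $o(\Delta t)$ remainder. The key cancellation is that the leading $O(1)$ term $\phi(\vec{x},\vec{v}\vec{r},\vec{r})$ does not agree with the subtracted $\phi(\vec{x},\vec{v},\vec{r})$ pointwise, but does so after integration: by Lemma \ref{lem:consN} the consistency relation is propagated to $\mu^n_{(N)}$, whence $\int_{E^{(N)}}\phi(\vec{x},\vec{v}\vec{r},\vec{r})\,d\mu^n_{(N)}=\int_{E^{(N)}}\phi(\vec{x},\vec{v},\vec{r})\,d\mu^n_{(N)}$ and the two $O(1)$ contributions cancel exactly. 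What remains is $\Delta t$ times the integral of the bracketed sum, plus the integrated remainder; dividing by $\Delta t$ reproduces the stated right-hand side together with the error $\tfrac{1}{\Delta t}\int o(\Delta t)\,d\mu^n_{(N)}$, which is to be identified with the $o(1)$ term.

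The step I expect to be the main obstacle is precisely this last identification, i.e.\ passing from the pointwise $o(\Delta t)$ of Lemma \ref{lem:OneStep} to an $o(\Delta t)$ after integration against $\mu^n_{(N)}$. Because the test functions in $\mathcal{C}^{(N)}$ only have bounded first derivatives, the first-order Taylor remainder in \eqref{eq:252} is controlled by $\|a_j\|\,\sup_s\|\nabla\phi(\,\cdot+s\Delta t a_j)-\nabla\phi(\cdot)\|$ with increments $a_j \in \{V(x_j,v_j),F_j(\vec{x},\vec{v})\}$, and these increments are not uniformly bounded (recall $\|V(x,v)\|=\|v\|$, and $F_j$ grows at most linearly in $v$). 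I would therefore justify the interchange by dominated convergence: the remainder divided by $\Delta t$ is dominated by a constant multiple of $\sum_j\|a_j\|$, so it suffices to assume a finite first moment of $\mu^n_{(N)}$ in the velocity (and hence force) variables, which is inherited from $\mu^0_{(N)}$ under the linear-growth dynamics. Granting this uniform integrability, the remainder integral is $o(\Delta t)$, and collecting the surviving terms and dividing by $\Delta t$ yields the claimed identity.
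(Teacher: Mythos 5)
Your proof is correct and follows essentially the same route as the paper, whose entire proof is the one-line remark that the result is a direct application of Lemmas \ref{lem:OneStep} and \ref{lem:consN}: your duality step $\int_{E^{(N)}}\phi\,d\mu^{n+1}_{(N)}=\int_{E^{(N)}}U\phi\,d\mu^{n}_{(N)}$, the substitution of the one-step expansion of $U\phi$, and the cancellation of the $O(1)$ terms via the propagated consistency relation constitute exactly the intended argument. Your closing observation that the pointwise $o(\Delta t)$ remainder needs a moment or uniform-integrability hypothesis on $\mu^n_{(N)}$ to survive integration is a genuine subtlety the paper passes over in silence, and making it explicit strengthens the argument rather than deviating from it.
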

\begin{proof}
The result is a direct application of lemma \ref{lem:OneStep} and \ref{lem:consN}.
\end{proof}

The kinetic model to the stochastic microscopic pedestrian flow model can be derived
as follows:
If we apply theorem \ref{thm:N_part_dist} on the function $\phi(\vec{x},\vec{v},\vec{r}) = \psi(x_1,v_1,r_1) \in \mathcal{C}^{(N)}$, we get 
\begin{align}
&\frac{1}{\Delta t}\left(\int_{E} \psi(x_1,v_1,r_1) \mu^{n+1}_{(1)}(d(x_1,v_1,r_1))-\int_{E} \psi(x_1,v_1,r_1) \mu^{n}_{(1)}(d(x_1,v_1,r_1))\right)\notag \\
=& \int_{E} \Big[r_1\nabla_{x_1} \psi(x_1,v_1 r_1,r_1) \cdot V(x_1,v_1)+r_1\nabla_{v_1} \psi(x_1,v_1 r_1,r_1) \cdot F^{dest}(x_1,v_1)\notag \\
&+\lambda(r_1,x_1)(\psi(x_1,v_1 r_1,1-r_1)-\psi(x_1,v_1 r_1,r_1))\Big]\mu^n_{(1)}(d(x_1,v_1,r_1))\notag \\
&+\frac{1}{N-1}\sum_{j=2}^N\int_{E^{(N)}}r_1\nabla_{v_1} \psi(x_1,v_1 r_1,r_1)\cdot G(x_1-x_j)\mu^n_{(N)}(d(\vec{x},\vec{v},\vec{r}))\label{eq:N_part_conv}\\
&+o(1)\notag,
\end{align}
where $\mu_{(1)}^n$ denotes the distribution of the first pedestrian.

Using the molecular chaos assumption \eqref{eq:MolChaos}, we can express \eqref{eq:N_part_conv} as
\begin{align*}
\;&\int_{E^{(N)}} r_1 \nabla_{v_1} \psi(x_1,v_1 r_1,r_1)\cdot G(x_1-x_j)\mu^n_{(N)}(d(\vec{x},\vec{v},\vec{r}))\\
=&\; \int_{E} r_1 \nabla_{v_1} \psi(x_1,v_1 r_1,r_1)\cdot \int_E G(x_1-x_2)\mu^n_{(1)}(d(x_2,v_2,r_2)) \mu^n_{(1)}(d(x_1,v_1,r_1))
\end{align*}
and therefore
\begin{align*}
&\; \frac{1}{N-1}\sum_{j=2}^N\int_{E^{(N)}} r_1 \nabla_{v_1} \psi(x_1,v_1 r_1,r_1)\cdot G(x_1-x_j)\mu^n_{(N)}(d(\vec{x},\vec{v},\vec{r}))\\
= &\; \int_{E} r_1\nabla_{v_1} \psi(x_1,v_1 r_1,r_1)\cdot \int_E  G(x_1-x_2)\mu^n_{(1)}(d(x_2,v_2,r_2)) \mu^n_{(1)}(d(x_1,v_1,r_1)).
\end{align*}

We introduce $(\mu^t, t\geq 0)$ such that $\mu^n_{(1)} = \mu^{n\Delta t}$ and obtain a continuous time equation for $\mu^t$ by $\Delta t \to 0$ reading
\begin{align}
&\frac{d}{dt} \int_{E} \psi(x,v,r) \mu^{t}(d(x,v,r))\notag \\
=& \int_{E} \Big[r\nabla_{x} \psi(x,rv,r) \cdot V(x,v)+r\nabla_{v} \psi(x,rv,r) \cdot F^{dest}(x,v)\notag \\
&+\lambda(r,x)(\psi(x,rv,1-r)-\psi(x,rv,r))\notag\\
&+r \nabla_v \psi(x,rv,r)\cdot \int_{E} G(x-y)\mu^t(d(y,w,s))\Big]\mu^t(d(x,v,r)).\label{eq:kinetic_model}
\end{align} 

\begin{remark}
We prefer to use integral equations instead of a differential representation of~\eqref{eq:kinetic_model}. 
This is due to the fact that non-moving pedestrians have zero velocity and hence the measure
\[B^v \mapsto \mu^t(B^x \times B^v \times \{0\})\]
is a Dirac distribution ($\sim \epsilon_0(B^v)$). 
\end{remark}

Note that the one-particle distribution $\mu^t$ is often called kinetic model as the following definition indicates:
\begin{definition}
We call a family of measures $(\mu^t,t\geq 0)$ satisfying \eqref{eq:kinetic_model} for every $\psi \in \mathcal{C}$ \emph{stochastic kinetic pedestrian model}.
\end{definition}

\subsection{Macroscopic Model}
As we have seen, 
the kinetic model describes the one particle distribution including detailed information about the velocity
of the pedestrians. However, we are mainly interested in the derivation of a simplified deterministic macroscopic model capturing
the stochastic dynamics and hence significant less computational costs.

Let \[\rho^t_k(B^x) := \int_{B^x\times \R^2 \times \{k\}} \mu^t(d(x,v,r))\]
be the probability to find a pedestrian in $B^x \in \mathcal{B}(\R^2)$ with status $k\in \{0,1\}$ and $\rho^t(B^x):= \rho^t_0(B^x)+\rho^t_1(B^x)$. If we choose $\phi(x,v,r) = \eta(x,r) \in \mathcal{C}$, then 
\[\frac{d}{dt}\int_E \phi(x,v,r) \mu^t(d(x,v,r)) = \frac{d}{dt} \int_{\R^2} \eta(x,0) \rho^t_0(dx)+\frac{d}{dt} \int_{\R^2} \eta(x,1) \rho^t_1(dx) \] 
and together with \eqref{eq:kinetic_model} we obtain
\begin{align*}
&\;\frac{d}{dt} \int_{\R^2} \eta(x,0) \rho^t_0(dx)+\frac{d}{dt} \int_{\R^2} \eta(x,1) \rho^t_1(dx)\\
=&\; \int_{E} r \nabla_x \eta(x,r) \cdot V(x,v) \mu^t(d(x,v,r))\\
&+\;\int_{\R^2} \lambda(0,x)(\eta(x,1)-\eta(x,0)) \rho_0^t(dx)+\int_{\R^2} \lambda(1,x)(\eta(x,0)-\eta(x,1)) \rho_1^t(dx),
\end{align*} 
where we consider no change in the velocity any more.

In a next step, we need to approximate the expression
\[\int_{E} r \nabla_x \eta(x,r) \cdot V(x,v) \mu^t(d(x,v,r))\]
with measures $\rho_0^t$ and $\rho_1^t$
to get a closed representation.
This can be achieved by the following rescaling arguments of the kinetic pedestrian flow model:
Let $t^\delta = t \delta$ and $x^\delta = x \delta$ be the rescaled variables satisfying $1\gg\delta>0$, then we can rewrite \eqref{eq:kinetic_model} as 
\begin{align}
&\frac{d}{dt} \int_{E} \psi(x,v,r) \mu^{t,\delta}(d(x,v,r))\notag \\
=& \int_{E} \Big[r \nabla_{x} \psi(x,rv,r) \cdot V(x,v)\notag \\
&+\frac{1}{\delta}[r\nabla_{v} \psi(x,rv,r) \cdot (F^{dest}(x,v)+\int_{\R^2}G(x-y)\rho^{t,\delta}(dy))\notag \\
&+\lambda(r,x)(\psi(x,rv,1-r)-\psi(x,rv,r))] \Big]\mu^{t,\delta}(d(x,v,r))\label{eq:kinetic_model_scaled}
\end{align} 
where the index $\delta$ at $x$ and $t$ is neglected.
By defining
\[
\textbf{F}(x,\rho^{t,\delta}) := \frac{v^C}{\tau}D(x) + \int_{\R^2}  G(x-y) \rho^{t,\delta}(dy),
\]
we get the new expression
\begin{align}
&\frac{1}{\delta}\int_E[r\nabla_{v} \psi(x,rv,r) \cdot (F^{dest}(x,v)+\int_{\R^2}G(x-y)\rho^{t,\delta}(dy))\notag\\
&\; +\lambda(r,x)(\psi(x,rv,1-r)-\psi(x,rv,r))] \Big]\mu^{t,\delta}(d(x,v,r))\notag \\
=&\;\frac{1}{\delta}\int_E\Big[r\nabla_{v} \psi(x,rv,r) \cdot \left(\textbf{F}(x,\rho^{t,\delta})-\frac{v}{\tau}\right)+\lambda(r,x)(\psi(x,rv,1-r)-\psi(x,rv,r))] \Big]\mu^{t,\delta}(d(x,v,r)) \label{eq:closure1}.
\end{align}

We remark that at this point it is not obvious whether a measure $\mu^{t,\delta}$ in the limit $\delta \to 0$ exists 
for equation \eqref{eq:closure1}. However,
in \cite{Jabin2000} it is analytically shown that for expressions 
of type  
\[\frac{1}{\delta}\int_E\Big[\nabla_{v} \psi(x,v) \cdot \left(\textbf{F}(x,\rho^{t,\delta})-\frac{v}{\tau}\right)\Big]\mu^{t,\delta}(d(x,v))\] 
if $G, F^{dest} \in W^{1,\infty}$, the sequence of measures $\mu^{t,\delta}$ weakly converges to a measure $\mu^{t,0}$ satisfying
\begin{align}
\mu^{t,0}(B^x \times B^v) = \int_{B^x}\epsilon_{\tau \textbf{F}(x,\rho^{t,0})}(B^v) \rho^{t,0}(dx) \label{eq:closure_orig}
\end{align}
for every $B^x,B^v \in \mathcal{B}(\R^2)$. 
This    
motivates the use of a Dirac distribution in the velocity to deduce a simplified macroscopic model. We set
\begin{align}
\mu^{t,\delta}_0(B^x\times B^v) &:= \epsilon_0(B^v) \int_{B^x} \frac{\lambda(1,x)}{\lambda(0,x)}\rho_1^{t,\delta}(dx) \text{ and } \label{eq:closure_dist1} \\
\mu^{t,\delta}_1(B^x\times B^v) &:= \int_{B^x} \epsilon_{v^\ast(x)}(B^v) \rho_1^{t,\delta}(dx) \label{eq:closure_dist2}
\end{align}
and intend to find a macroscopic velocity $v^\ast$ as closure distribution in the limit $\delta \to 0$. Inserting the measures \eqref{eq:closure_dist1} and \eqref{eq:closure_dist2} into \eqref{eq:closure1} yields
\begin{align}
&\;\int_E\Big[r\nabla_{v} \psi(x,rv,r) \cdot \left(\textbf{F}(x,\rho^{t,\delta})-\frac{v}{\tau}\right)+\lambda(r,x)(\psi(x,rv,1-r)-\psi(x,rv,r))] \Big]\mu^{t,\delta}(d(x,v,r))\notag \notag \\
=&\; \int_{\R^2} \lambda(1,x)(\psi(x,0,1)-\psi(x,0,0)) \rho_1^{t,\delta}(dx)\notag \\
&+\; \int_{\R^2} \lambda(1,x)(\psi(x,v^\ast(x),0)-\psi(x,v^\ast(x),1)) \rho_1^{t,\delta}(dx)\notag \\
&+ \; \int_{\R^2} \nabla_{v} \psi(x,v^\ast(x),1) \cdot \left(\textbf{F}(x,\rho^{t,\delta})-\frac{v^\ast(x)}{\tau}\right) \rho_1^{t,\delta}(dx). \label{eq:clos1}
\end{align}
We use a Taylor expansion to get
\[\psi(x,0,1)-\psi(x,v^\ast(x),1) = -\nabla_v \psi(x,v^\ast (x),1)v^\ast(x)+\frac{1}{2}v^\ast(x)^T \nabla_v^2 \psi(x,\theta_1 v^\ast(x),1)v^\ast(x)\] and 
\[\psi(x,v^\ast(x),0)-\psi(x,0,0) = \nabla_v \psi(x,\theta_2 v^\ast (x),0)v^\ast(x)\]
for some $\theta_1,\theta_2 \in [0,1]$.
and assuming that $\psi$ is twice differentiable with respect to $v$. 

Inserting the Taylor expansion into \eqref{eq:clos1} leads to
\begin{align}
\int_{\R^2} \nabla_{v} \psi(x,v^\ast(x),1) \cdot \left(\textbf{F}(x,\rho^{t,\delta})-\frac{v^\ast(x)}{\tau}-\lambda(1,x) v^\ast(x)\right) \rho_1^{t,\delta}(dx) + R \label{eq:clos2} 
\end{align}
with 
\begin{align*}
R = \int_{\R^2} [\nabla_v \psi(x,\theta_2 v^\ast(x),0)v^\ast(x)+\frac{1}{2}v^\ast(x)^T \nabla_v^2 \psi(x,\theta_1 v^\ast(x),1))v^\ast(x)]\lambda(1,x) \rho_1^t(dx).
\end{align*}

If we choose 
\begin{align}
v^\ast(x) = \frac{\tau \textbf{F}(x,\rho^{t,\delta})}{1+\tau \lambda(1,x)}\label{eq:closVelo}
\end{align}
as $\delta \to 0$,
the first term in \eqref{eq:clos2} vanishes and only $R$ remains. This choice for a closure distribution is close to the monokinetic closure \eqref{eq:closure_orig} but with scaling factor $(1+\tau \lambda(1,x))^{-1}\approx 1-\tau \lambda(1,x)$. 
So we end up with a meaningful result since 
the factor $-\tau \lambda(1,x)$ reduces the velocity about the expected number of stops during the relaxation time $\tau$. 

Finally, we collect all ideas and computations so far.
\begin{definition}\label{def:macro_model}
A family of measures $((\rho_0^t,\rho_1^t),t\geq 0)$ satisfying 
\begin{align}
&\;\frac{d}{dt} \int_{\R^2} \eta(x,0) \rho_0^t(dx) + \frac{d}{dt} \int_{\R^2} \eta(x,1) \rho_1^t(dx)\notag \\ =& \;\int_{\R^2} \nabla_x \eta(x,1) \cdot V\left(x,\frac{\tau \mathbf{F}(x,\rho^t)}{1+\tau \lambda(1,x)}\right)\rho_1^t(dx)\notag \\
& \;+ \int_{\R^2} \lambda(0,x) (\eta(x,1)-\eta(x,0)) \rho_0^t(dx) + \int_{\R^2} \lambda(1,x) (\eta(x,0)-\eta(x,1)) \rho_1^t(dx)\notag 
\end{align}
for every $\eta(\cdot,0), \eta(\cdot,1) \in C_b^1(\R^2;\R)$ with
\begin{align*}
\rho_0^0 = \rho_0^{I}, \qquad \rho_1^0 = \rho_1^{I}, \qquad \rho_0^{I}(\R^2)+\rho_1^{I}(\R^2) = 1
\end{align*}
for some positive measures $\rho_0^{I}, \rho_1^{I}$ on $(\R^2,\mathcal{B}(\R^2))$ is called a \emph{stochastic macroscopic pedestrian model}.
\end{definition}

We are able to obtain the differential representation of the stochastic macroscopic pedestrian model
by applying the closure distribution~\eqref{eq:closVelo} to eliminate the dynamics of the velocity.
To do so, we assume that the measures $\rho_0^t$ and $\rho_1^t$ are absolutely continuous with respect to the Lebesgue measure and possess density functions $u_0$ and $u_1$, respectively. Then,
\begin{align*}
&\;\frac{d}{dt} \int_{\R^2} \eta(x,0) u_0(t,x) dx + \frac{d}{dt} \int_{\R^2} \eta(x,1) u_1(t,x) dx\\
=& \;\int_{\R^2} \nabla_x \eta(x,1) \cdot V\left(x,\frac{\tau \mathbf{F}(x,\rho^t)}{1+\tau \lambda(1,x)}\right)u_1(t,x)dx \\
& \;+ \int_{\R^2} \lambda(0,x) (\eta(x,1)-\eta(x,0)) u_0(t,x)dx + \int_{\R^2} \lambda(1,x) (\eta(x,0)-\eta(x,1)) u_1(t,x)dx\\
=& \; \int_{\R^2} \eta(x,0) (\lambda(1,x) u_1(t,x)-\lambda(0,x) u_0(t,x))dx\\
& \;+\int_{\R^2} \nabla_x \eta(x,1) \cdot V\left(x,\frac{\tau \mathbf{F}(x,\rho^t)}{1+\tau \lambda(1,x)}\right)u_1(t,x) + \eta(x,1) (\lambda(0,x)u_0(t,x)-\lambda(1,x) u_1(t,x))dx 
\end{align*}
for every $\eta(\cdot,0),\eta(\cdot,1) \in C^1_b$.
Due to $C^1_b \subset C_0^\infty$, the macroscopic model implies the weak formulation  
\begin{align}
\partial_t u_0(t,x) &= \lambda(1,x)u_1(t,x)-\lambda(0,x)u_0(t,x), \label{eq:Macro21}\\
\partial_t u_1(t,x) &= \lambda(0,x)u_0(t,x)-\lambda(1,x)u_1(t,x) - \Div_x\left(V\left(x,\frac{\tau \mathbf{F}(x,u(t,\cdot))}{1+\tau \lambda(1,x)}\right)u_1(t,x)\right),\label{eq:Macro22}\\
u(t,x) &= u_0(t,x)+u_1(t,x) \label{eq:Macro23}
\end{align}
equipped with initial conditions $u_0(0,x) = g_0(x)\geq 0$, $u_1(0,x) = g_1(x) \geq 0$ satisfying \[\int_{\R^2} g_0(x)+g_1(x)dx = 1.\] 

Note that the latter representation of the stochastic macroscopic pedestrian model also allows for comparisons to the deterministic model setting. Figure \ref{fig:Overview} illustrates our main results starting from the microscopic model to the resulting first order macroscopic model using the proposed closure velocities.
The corresponding deterministic results can be directly obtained from \cite{Chen2016,Etikyala2014,Jabin2000}.

\begin{figure}[htb!]
\centering
\begin{tikzpicture}[font=\footnotesize,every node/.style={node distance=2cm},%
    force1/.style={draw, fill=black!0,inner sep=2mm,text width=0.1\textwidth, align=flush center,%
    minimum height=1.2cm, font=\bfseries\footnotesize},
    force2/.style={draw, fill=black!0,inner sep=2mm,text width=0.3\textwidth, align=flush center,%
    minimum height=1.2cm, font=\bfseries\footnotesize},
    force3/.style={draw, fill=black!0,inner sep=2mm,text width=0.45\textwidth, align=flush center,%
    minimum height=1.2cm, font=\bfseries\footnotesize},
    forceStoch/.style={draw, fill=black!0,inner sep=2mm,text width=0.45\textwidth, align=flush center,%
    minimum height=2cm, font=\bfseries\footnotesize},
    forceDet/.style={draw, fill=black!0,inner sep=2mm,text width=0.3\textwidth, align=flush center,%
    minimum height=2cm, font=\bfseries\footnotesize}]
    
  \node [force1](cc) {Closure Velocity};
  \node [force1,above of=cc](mic){Micro{\-}scopic};
  \node [force1,below of=cc](mac){Macro{\-}scopic};
  
  \node [forceDet,right=.5cm of mic](detmic){\vspace{-4mm}
  \begin{align*}
  \dot{x}_i &= V(x_i,v_i),\\
  \dot{v}_i & = F_i(\vec{x},\vec{v}).
  \end{align*}};
  \node [forceStoch,right=.5cm of detmic](stochmic){\vspace{-2.5mm}
  \begin{align*}
  P(x_i^{n+1} &= x_i^n+\Delta t r_i^n V(x_i^n,v_i^n)|X^n) = 1,\\
  P(v_i^{n+1} &= v_i^n+\Delta t F_i(\vec{x}^n,\vec{v}^n)|X^n) = r_i^n,\\
  P(r_i^{n+1} &= 1-r_i^n|X^n) = \Delta t \lambda(r_i^n,x_i^n).
  \end{align*}};
  
  \node [forceDet,right=.5cm of cc](detclos){\vspace{-3mm}
  \begin{align*}
	v^\ast = \tau \mathbf{F}(x,u)
  \end{align*}};
  \node [forceStoch,right=.5cm of detclos](stochclos){\vspace{-1.5mm}
  \begin{align*}
  	v^\ast_0 &= 0\\
	v^\ast_1 &= \frac{\tau}{1+\tau \lambda(1,x)} \mathbf{F}(x,u_0+u_1)
  \end{align*}};
  
  \node [forceDet,right=.5cm of mac](detmac){\vspace{-3mm}
  \begin{align*}
	\partial_t u = -\Div_x(V(x,v^\ast)u)
  \end{align*}};
  \node [forceStoch,right=.5cm of detmac](stochmac){\vspace{-2mm}
  \begin{align*}
	\partial_t u_0 &= \lambda_1 u_1-\lambda_0 u_0\\
	\partial_t u_1 &=\lambda_0 u_0-\lambda_1 u_1 -\Div_x(V(x,v^\ast_1)u_1)
  \end{align*}};
  
  \node [force2, above of=detmic](det){Deterministic};
  \node [force3, above of=stochmic](stoch){Stochastic};
  
  
  \path[-]
;
\end{tikzpicture}
\caption{Overview of the deterministic and stochastic model hierarchy equations}
\label{fig:Overview}
\end{figure}

\section{Numerical Approximation}
\label{sec:3}
This section deals with the numerical treatment of the presented stochastic microscopic and macroscopic pedestrian model. 
First, we present a stochastic simulation algorithm for the microscopic model
to generate efficiently sample paths. Second, we introduce a numerical approximation scheme for the macroscopic model in its differential form.

\subsection{Microscopic Model}
The generation of sample paths for the microscopic model can be directly implemented with the transition kernel and the Markov property. In detail, let $X^n = (\vec{x}^n, \vec{v}^n,\vec{r}^n) \in E^{(N)}$ be the current state of the system and $ \Delta t \leq ||\lambda||_\infty^{-1}$. 
Due to \[P^{\mu^0_{(N)}}(x_i^{n+1} = x_i^n+r_i^n\Delta t V_i(\vec{x}^n,\vec{v}^n)|X^n) = 1\] we compute $x_i^{n+1} = x_i^n+r_i^n \Delta t V(x_i^n,v_i^n)$ and set $v_i^{n+1} = 0$ if $r_i^n = 0$ and $v_i^{n+1} = v_i^n+\Delta t F_i(\vec{x}^n,\vec{v}^n)$ if $r_i^n = 1$.
The equation
\[P^{\mu^0}(\vec{r}^{n+1} = z|X^n) = \prod_{i=1}^N\left(\Ind_{z_i}(r_i^n)(1-\Delta t \lambda(r_i^n,x_i^n))+\Ind_{1-z_i}(r_i^n)\Delta t \lambda(r_i^n,x_i^n)\right) \]  
implies that the status of each pedestrian $r_i^n$ to walk or to stop can be chosen independently according to a Bernoulli distribution with probabilities $p_i = \Delta t \lambda(r_i^n,x_i^n)$ where $\lambda$ is the given rate function.
Summarizing, we obtain algorithm \ref{algo:One_Step}.
\begin{algorithm}[htb!]
\caption{Euler approximation of the microscopic model for one time-step}
\label{algo:One_Step}
\KwData{Current state $(\vec{x}^n, \vec{v}^n,\vec{r}^n)$}
\Begin{
\For{$i =1:N$}{
	$x_i^{n+1} =  x_i^n+r_i^n \Delta t V(x_i^n,v_i^n)$\;
	\eIf{$r_i^n = 0$}{
	$v_i^{n+1} = 0$\;}
	{
	$v_i^{n+1} = v_i^n+\Delta t F_i(\vec{x}^n,\vec{v}^n)$\;
	}
	Generate $U \sim \mathcal{U}([0,1])$\;
	\eIf{$U \leq \Delta t \lambda(r_i^n,x_i^n)$}{
	$r_i^{n+1} = 1-r_i^n$\;	
	}{
	$r_i^{n+1} = r_i^n$\;
	}
}
}
\end{algorithm}

Note that we need to sample initial values according to the initial distribution $\mu^0$. Then, trajectories of the microscopic pedestrian model can be computed by algorithm~\ref{algo:One_Step} until a finite time horizon is reached.

\subsection{Macroscopic Model}
We consider equations \eqref{eq:Macro21}--\eqref{eq:Macro23}
to solve the macroscopic pedestrian model numerically by established approximation schemes.
Therefore, we decouple the original model into two separate problems (advection and reaction)
using a fractional-step method, see e.g. \cite{LevequeRed}, and solve 
the problems as follows:
\begin{enumerate}
\item Advection term \[\partial_t u_1+\Div_x\left(V\left(x,\frac{\tau \mathbf{F}(x,u(t,\cdot))}{1+\tau \lambda(1,x)}\right)u_1(t,x)\right) = 0. \]
This is a two dimensional scalar conservation law with space dependent, non-local flux function
that is solved by a first order Roe method while the convolution in \[\mathbf{F}(x,u(t,\cdot)) = \frac{v^C}{\tau}D(x)+\int_{\R^2}G(x-y)u(t,y)dy\] is approximated by a rectangular rule.
Dimension splitting \cite{LevequeRed} enables the numerical solution of one dimensional problems to solve the two dimensional advection equation in each time step. 
\item Reaction term 
\begin{align*}
\partial_t u_0(t,x) &= \lambda(1,x)u_1(t,x)-\lambda(0,x)u_0(t,x),\\
\partial_t u_1(t,x) &= \lambda(0,x)u_0(t,x)-\lambda(1,x)u_1(t,x).
\end{align*}
\end{enumerate} 
These equations can be explicitly solved for every $x \in \R^2$ since the reaction is linear.
Let \[u^R(t,x) = (u^R_0(t,x),u^R_1(t,x))\] denote the solution of the reaction equation, then 
\begin{align*}
u^R(t,x) &= e^{t \Lambda(x)}u^R(0,x)\\
&= \frac{1}{\overline{\lambda}(x)}
\begin{pmatrix}
(\lambda(1,x)+\lambda(0,x) e^{-t\overline{\lambda}(x)})u_0^R(0,x) &+ \lambda(1,x)(1-e^{-t\overline{\lambda}(x)})u_1^R(0,x)\\
\lambda(0,x)(1-e^{-t\overline{\lambda}(x)})u_0^R(0,x) &+
(\lambda(0,x)+\lambda(1,x) e^{-t\overline{\lambda}(x)})u_1^R(0,x)
\end{pmatrix}
\end{align*}
with $\overline{\lambda}(x) = \lambda(0,x)+\lambda(1,x)$ and 
\[\Lambda(x) = 
\begin{pmatrix}
-\lambda(0,x) & \lambda(1,x) \\
\lambda(0,x)  & -\lambda(1,x)
\end{pmatrix}.\]

\section{Computational Results}
\label{sec:4}
For simulation purposes,
we focus on new effects arising from the stochastic dynamics
and qualitative comparisons of the model hierarchy. 
We need to define performance measures to evaluate the quality of approximations of
the microscopic and macroscopic model. 
Therefore, we assume that the macroscopic measure $\rho^t$ admits a Lebesgue density $u(x,t)$ which is approximated by cell means $u_{ij}^{\text{Mac},n}$ on rectangles $Q_{ij}$ with lengths $\Delta x$, $\Delta y$ and center $x_{ij}$ at time step $t_n$. 

Furthermore, the average density of pedestrians in $Q_{ij}$ for the microscopic model is given by
\[u^{\text{Mic},n}_{ij}:=\frac{\mathbb{E}^{\mu^0_{(N)}}[\frac{1}{N}\sum_{l=1}^N\Ind_{Q_{ij}}(x_l^n)]}{\Delta x \Delta y}.\]
This allows to specify the error between the microscopic and macroscopic model as
\begin{align*}
\epsilon^n := u^{\text{Mic},n}-u^{\text{Mac},n} 
\end{align*}
measured in terms of the $L^p$-norm
\begin{align}
||\epsilon^n||_{L^p}&:=\left( \Delta x \Delta y \sum_{ij} |u^{\text{Mic},n}_{ij}-u^{\text{Mac},n}_{ij}|^p\right)^{\frac{1}{p}}
\end{align}
for every $p \geq 1$. 

The mass balance 
\begin{align}
\operatorname{MB}_{\hat{x}}(u(t,\cdot)) := \int_{(-\infty,\hat{x}]\times \R} u(t,x)dx \label{eq:MB} 
\end{align}
at some vertical cut at position $\hat{x} \in \R$ determines the amount of mass in front of the cut. 
This measure extracts information about the time needed that a certain percentage of pedestrians crossed the cut. 

In the following examples, we assume that the destination force is a point force \[D(x) = \frac{x^D-x}{||x^D-x||}\] with destination point $x^D \in \R^2$ and interaction kernel $G$ as the gradient of a Morse Potential, i.e.
\[G(x) = -2(e^{-(||x||-0.9)}-e^{-2(||x||-0.9)})\frac{x}{||x||}.\]

The microscopic model is only comparable to the macroscopic one if we assume that the initial velocities are given by the closure velocities of the macroscopic model, i.e.,\ for randomly generated positions $(x_1,\dots,x_N)$ and states $(r_1,\dots,r_N)$, we set
\[v_i = r_i\frac{\tau}{1+\tau \lambda(1,x_i)}\left (\frac{v^C}{\tau}D(x_i)+\frac{1}{N}\sum_{j=1}^NG(x_i-x_j)\right ),\] 
where the initial states $r_i$ are computed according to a Bernoulli distribution with initial probability $P(r_i = 0) = p_0 \in [0,1]$.

\subsubsection*{Example 1: Corridor with Space Dependent Rates}
As a first example, we consider a strip without obstacles with boundaries 
given by bold lines, see figure \ref{fig:Ex1DensSol}.
The comfort velocity $v^C$ and the relaxation time $\tau$ are assumed to be one.
The distribution of the initial positions should be
\[\rho^0(B) = \frac{1}{2}\int_B \Ind_{[-2,-1]\times[-1,1]}(x,y) d(x,y).\]
Additionally, the initial conditions for the macroscopic model are 
\[g_0(x,y) = p_0 \Ind_{[-2,-1]\times[-1,1]}(x,y) \text{ and } g_1(x,y) = (1-p_0)\Ind_{[-2,-1]\times[-1,1]}(x,y)\] supplemented by the initial probability to stop $p_0 = \frac{1}{2}$.

We are interested in the dynamic behavior of the model
for a space dependent rate function: 
\begin{align}
\lambda(0,x) &= 
\begin{cases}
	6 & \text{ if } ||x||\leq 0.5 \\
	10 &\text{ else,}
\end{cases} \label{eq:Ex1Rate1}
\\
\lambda(1,x) &= 
\begin{cases}
	5 & \text{ if } ||x||\leq 0.5 \\
	4 &\text{ else.}
\end{cases} \label{eq:Ex1Rate2}
\end{align}

The results of a simulation with destination $x^D = (100,0)$, $\Delta x = \Delta y = \frac{1}{40}$, $N = 100$ pedestrians and $M = 1000$ Monte Carlo iterations are shown in figure \ref{fig:Ex1DensSol}. 
For our computations, we have used a WIN10 operated PC with 4-core i5-4690 CPU 3.50GHz, 16GB DDR3 RAM.
All codes are implemented in MATLAB R2016b. The computation times we observed are  
5149 seconds for the microscopic and 434 seconds for the macroscopic model. 

In this scenario, the rate $\lambda$ describes the number of events per time such that equations \eqref{eq:Ex1Rate1} and \eqref{eq:Ex1Rate2} indicate that pedestrians will stop more frequently than intend to walk inside the circle around the origin with radius $0.5$.
We expect that walking pedestrians circulate around the non-moving crowd
and merge behind the bulk until a comfort density is reached again, cf. times $t = 10$ and $t = 15$ in figure \ref{fig:Ex1DensSol}.
As we can also see, the influence of the space dependent rate on the density is significant since the density is much higher in the region where people stop longer and walk shorter. 

\begin{figure}[htb!]
\centering
\subfloat
{
\includegraphics[width = 1\textwidth]{./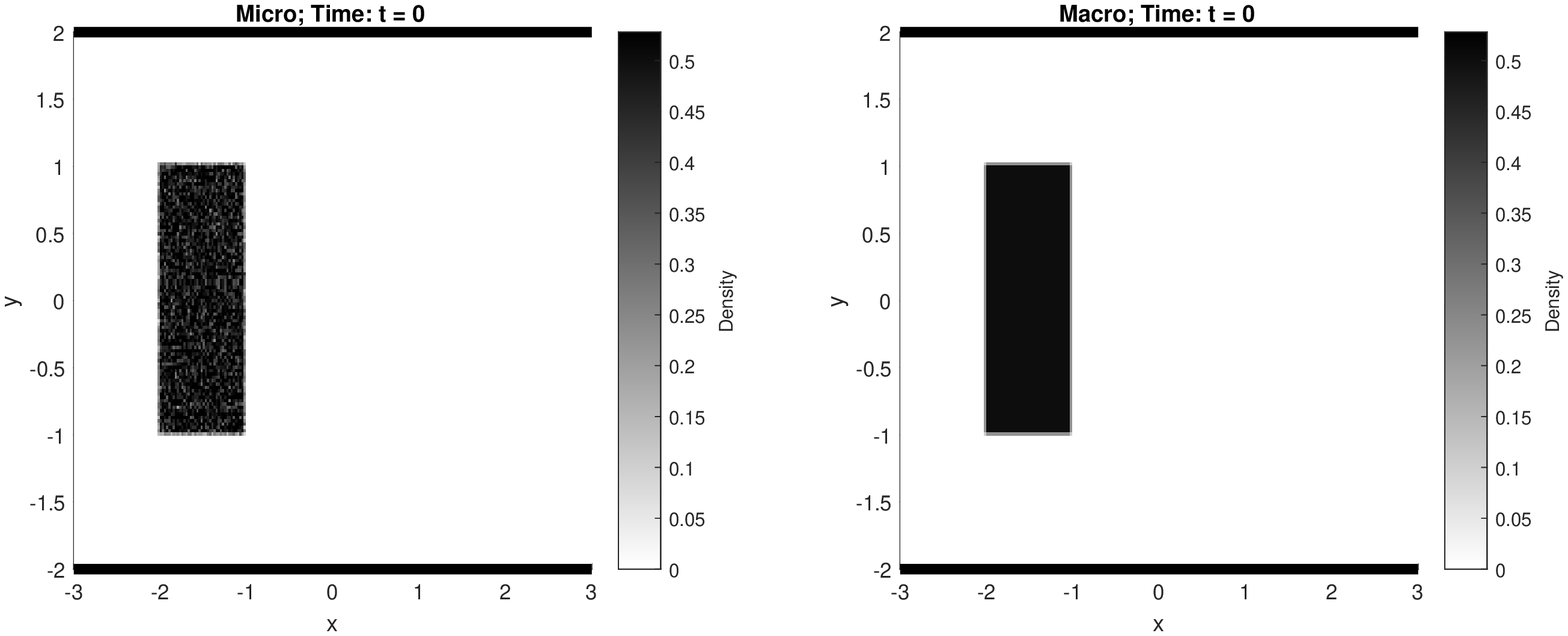}
}\qquad
\subfloat
{
\includegraphics[width = 1\textwidth]{./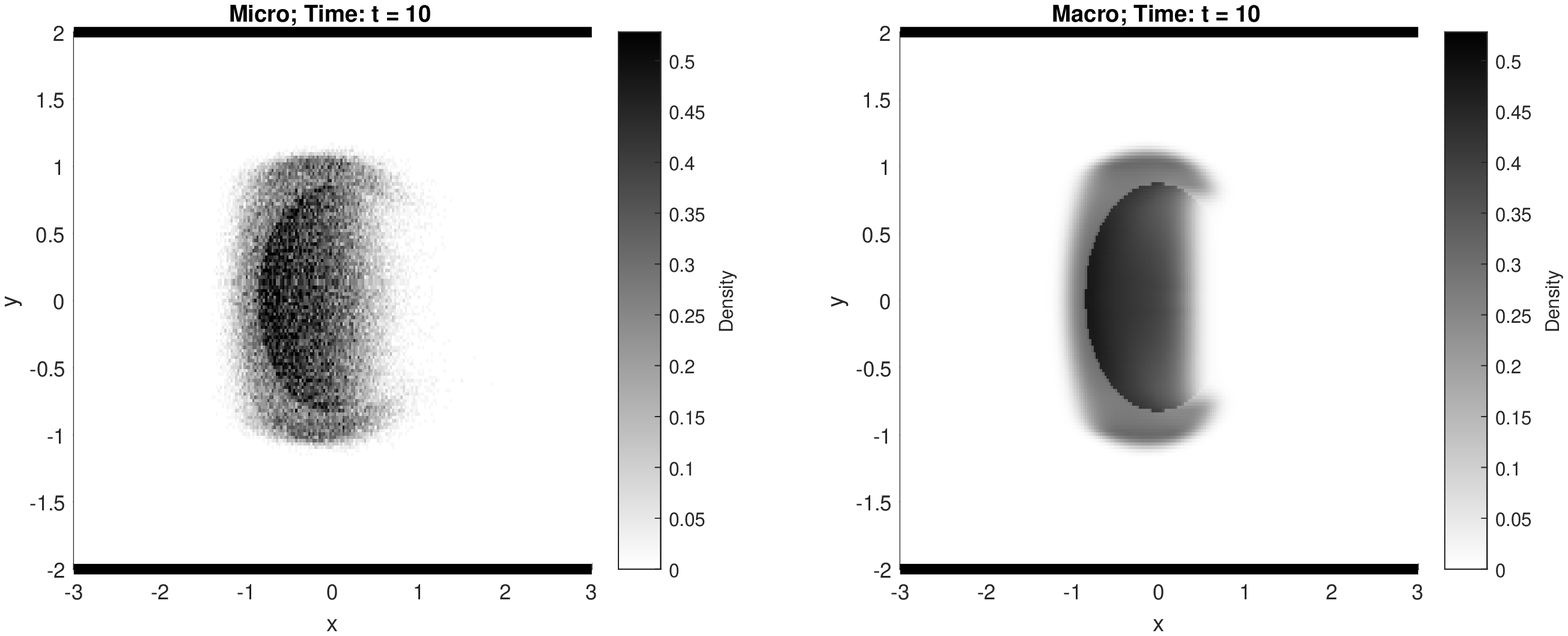}
}\qquad
\subfloat
{
\includegraphics[width = 1\textwidth]{./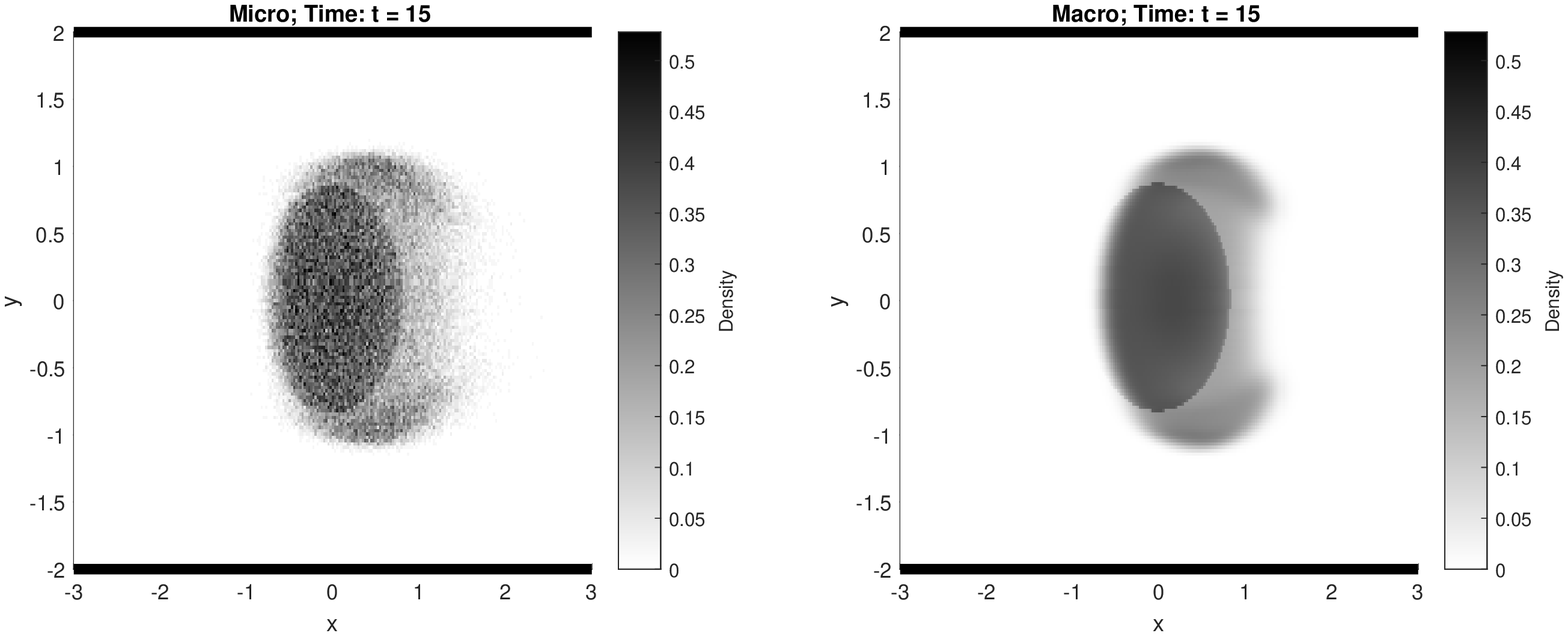}
}\qquad
\subfloat
{
\includegraphics[width = 1\textwidth]{./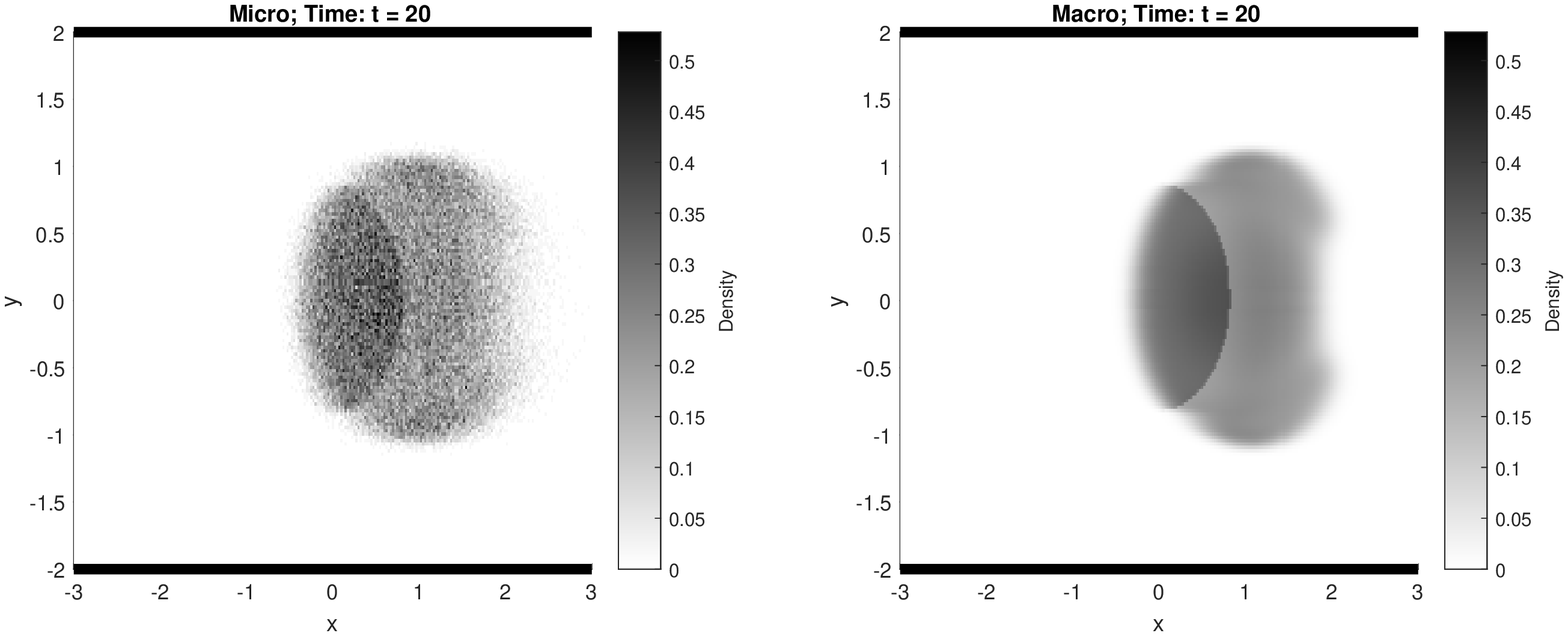}
}
\caption{Densities at different times}
\label{fig:Ex1DensSol}
\end{figure}

Apparently, the results from the microscopic and the macroscopic simulation are quite close, i.e. the density profiles
show the same qualitative behavior in time. This can be also verified
by the mass balance in 
figure \ref{fig:Ex1MassBalance} and the $L^1$ and $L^2$ error in figure \ref{fig:Ex1L1L2Error}.

\begin{figure}[htb!]
\centering
\subfloat{
\includegraphics[width = 0.5\textwidth]{./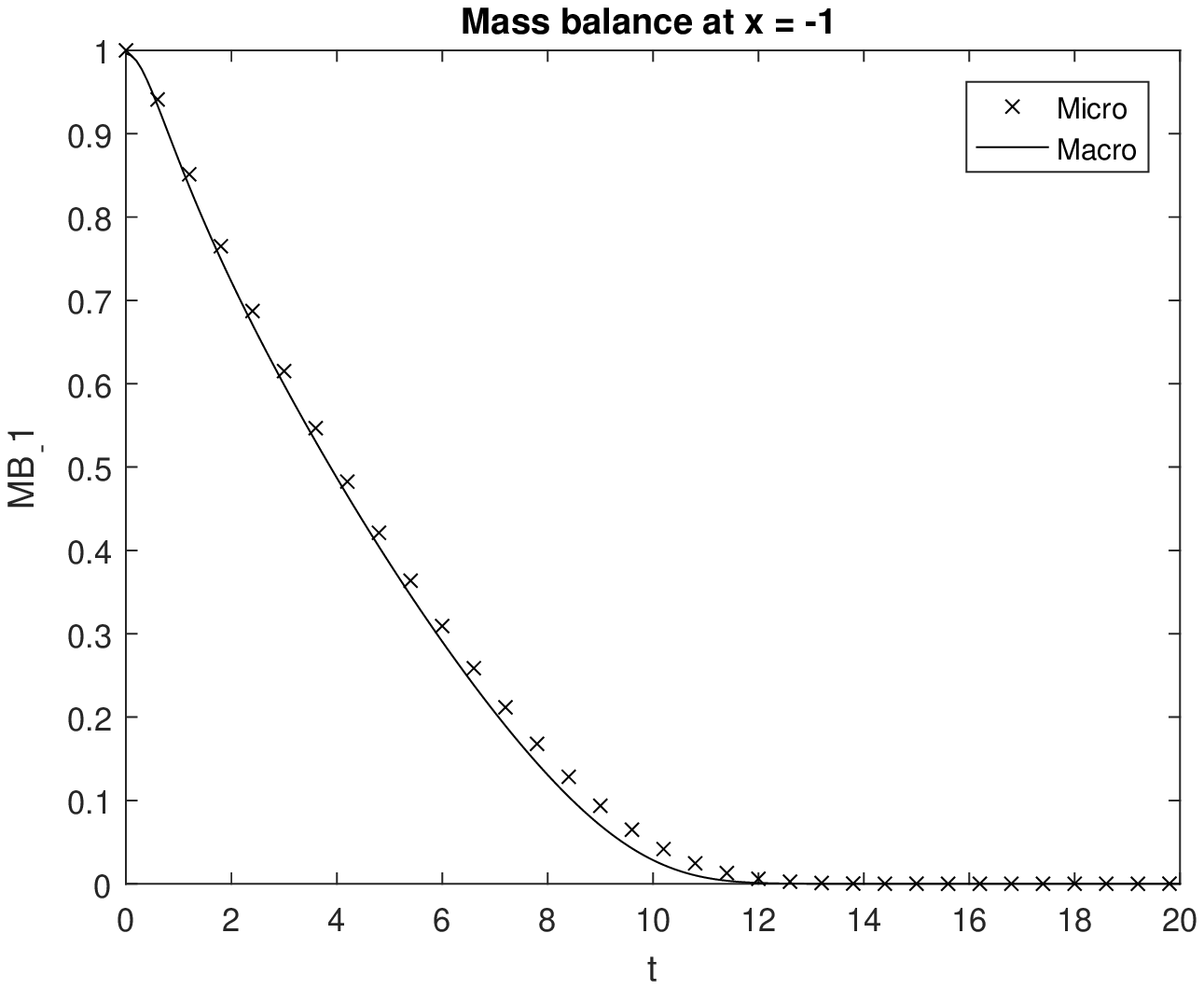}
}
\subfloat{
\includegraphics[width = 0.5\textwidth]{./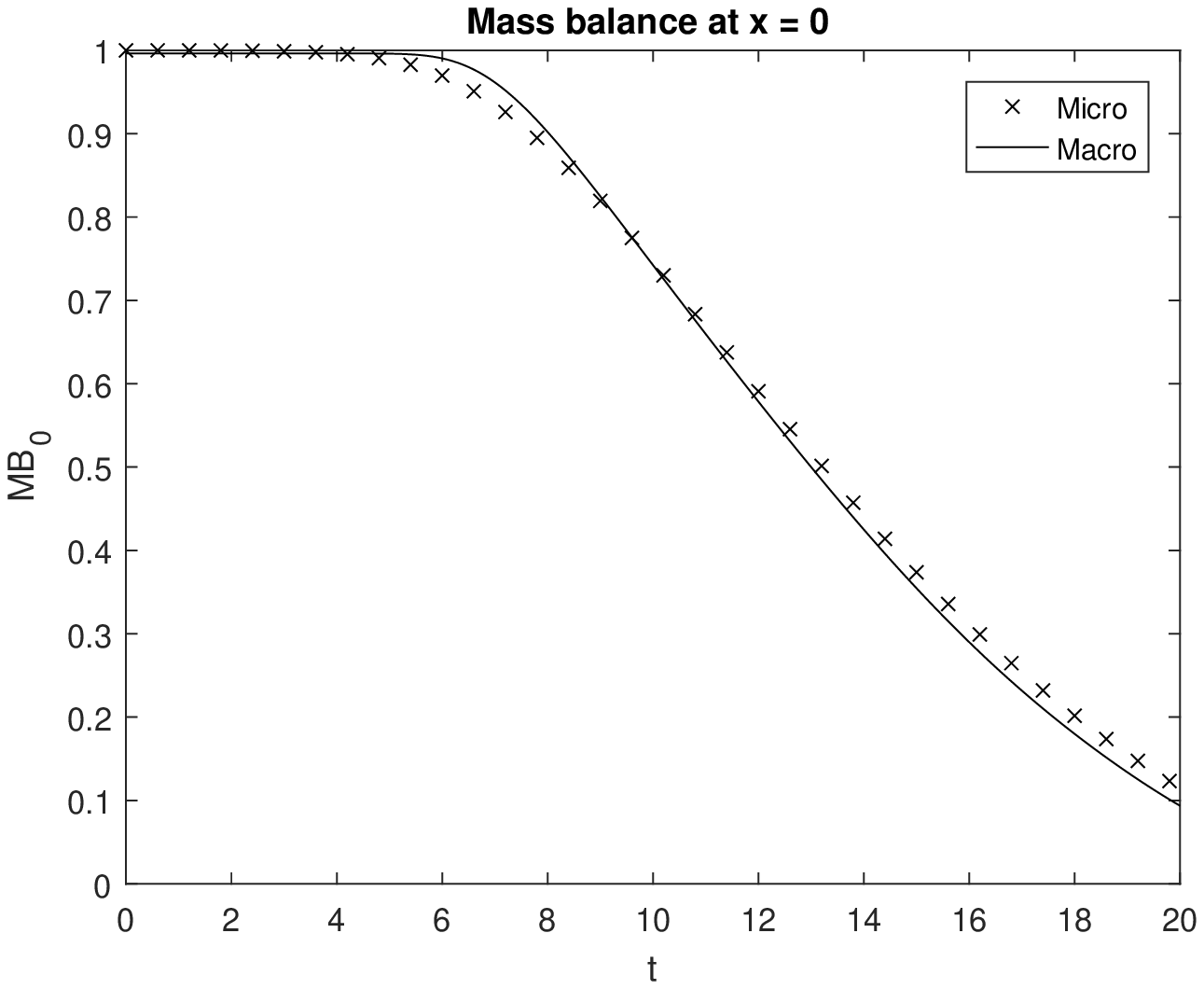}
}
\caption{Mass balances at $x=-1$ and $x=0$}
\label{fig:Ex1MassBalance}
\end{figure}
\begin{figure}[htb!]
\centering
\includegraphics[width = 0.5\textwidth]{./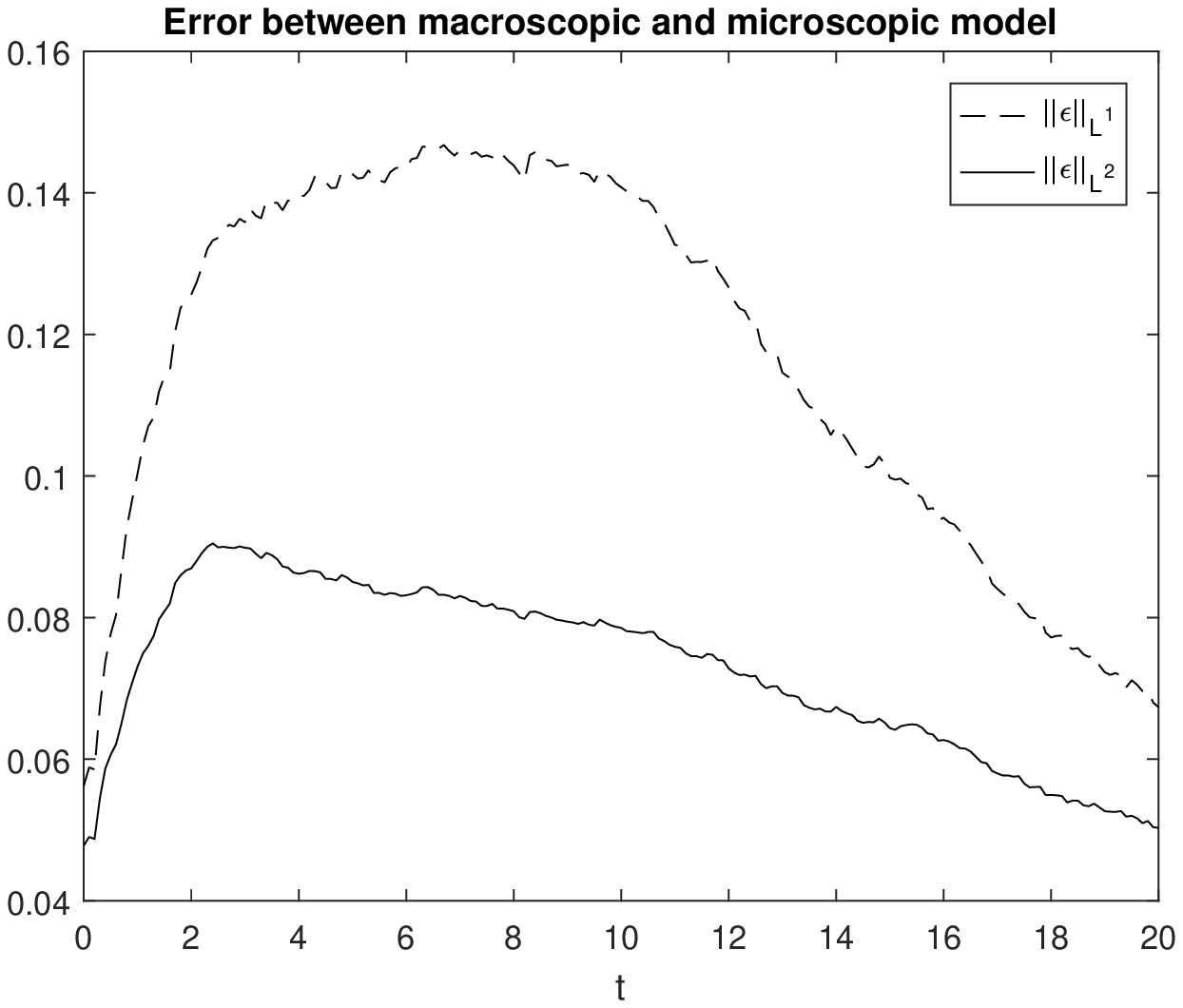}
\caption{$L^1$ and $L^2$ error}
\label{fig:Ex1L1L2Error}
\end{figure}

\subsubsection*{Example 2: Corridor including Bottlenecks}
Different to the first example,
we now focus on a scenario where two obstacles are located oppositely, see  
bold boundaries in figures \ref{fig:Ex2DensSolStoch} and \ref{fig:Ex2DensSolDet}. 
We distinguish the rate functions $\lambda_1$ and $\lambda_2$
to basically cover two relevant scenarios:

\begin{center}\vspace{-8mm}
\begin{tabular}{m{0.5\textwidth}m{0.5\textwidth}}
{
\begin{align*}
\lambda_1(0,x) &= 
\begin{cases}
	1 & \text{ if } x \in [-1,1], \\
	10 &\text{ else,}
\end{cases} 
\end{align*}} &
{
\begin{align*}
\lambda_1(1,x) &= 
\begin{cases}
	1 & \text{ if } x \in [-1,1], \\
	\frac{1}{100} &\text{ else,}
\end{cases} 
\end{align*}
} 
\\[-8mm]
{
\begin{align*}
\lambda_2(0,x) &=  10, 
\end{align*}

} &
{
\begin{align*}
\lambda_2(1,x) &=  \frac{1}{100}.  
\end{align*}
}
\end{tabular}
\end{center} \vspace{-8mm}
The rate function $\lambda_1$ is in-homogeneous in space and divided into two cases. In the case $x \in [-1,1]$ which is exactly between the obstacles (e.g.\ external attractions) pedestrians will stop more often and stay for a longer period.
Otherwise, for $x \in (-\infty,-1)\cup (1,\infty)$, pedestrians behave deterministic since the scaling factor $(1+\tau \lambda(1,x))^{-1}$ in \eqref{eq:closVelo} becomes very close to one and we get a velocity which corresponds to the deterministic model, cf. left column in figure~\ref{fig:Overview}.
On the contrary, the rate function $\lambda_2$ is homogeneous in space but will be used as the deterministic benchmark problem, see again
figure~\ref{fig:Overview}.

We consider the following setup: The destination is again located at $x^D = (100,0)$, the comfort speed is $v^C = 1$ and the relaxation time is supposed to be $\tau = 0.2$. The initial distribution is 
\[\rho^0(B) = \frac{2}{3}\int_B \Ind_{[-2.5,-1]\times[-0.5,0.5]}(x,y) d(x,y).\]
with probability $p_0 = 0.01$ to stop in the beginning. Therefore, the initial conditions for the macroscopic model read
\[g_0(x,y) = p_0  \frac{2}{3}\Ind_{[-2.5,-1]\times[-0.5,0.5]}(x,y) \text{ and } g_1(x,y) = (1-p_0)\frac{2}{3}\Ind_{[-2.5,-1]\times[-0.5,0.5]}(x,y).\]

The simulation is performed with step-sizes $\Delta x = \Delta y = \frac{1}{40}$, $N = 100$ pedestrians and $M = 1000$ Monte-Carlo iterations. We have used the same PC as before and the required computation times are 3006 seconds ($\lambda_1$), 3205 seconds ($\lambda_2$) for the microscopic model and 485 seconds ($\lambda_1$), 495 seconds ($\lambda_2$) for the macroscopic model.

In figures \ref{fig:Ex2DensSolStoch}--\ref{fig:Ex2DensSolDet} we plot the densities for different rate functions. 
The specular reflection type boundaries seem to work well and the pedestrians are rerouted along the obstacles.
For the choice of $\lambda_1$ in figure \ref{fig:Ex2DensSolStoch}, where the pedestrians tend to stop between the obstacles,   
we observe the expected queuing behavior. In contrast for the deterministic setting $\lambda_2$, higher densities in front of the obstacles result from the spatial conditions only and are not caused by stop-or-go decisions, cf. figure \ref{fig:Ex2DensSolDet}.  

The influence of different rate functions is also reflected by the comparison of the mass balances in figure \ref{fig:Ex2MB}. 
The first scenario, the bottleneck situation, reduces significantly the mean velocity of the pedestrians
and hence leads to longer travel times, see left picture in figure \ref{fig:Ex2MB}. 
After approximately 14 time units all pedestrians crossed the cut at $x = 1$ 
while for the deterministic scenario only 2.5 time units are needed, cf. right picture in figure \ref{fig:Ex2MB}.

Finally, we stress that the error between the microscopic and macroscopic model is again rather small, as figure \ref{fig:Ex2Errors} illustrates, and is not affected by the different rate functions. 

\begin{figure}[htb!]
\centering
\subfloat
{
\includegraphics[width = 1\textwidth]{./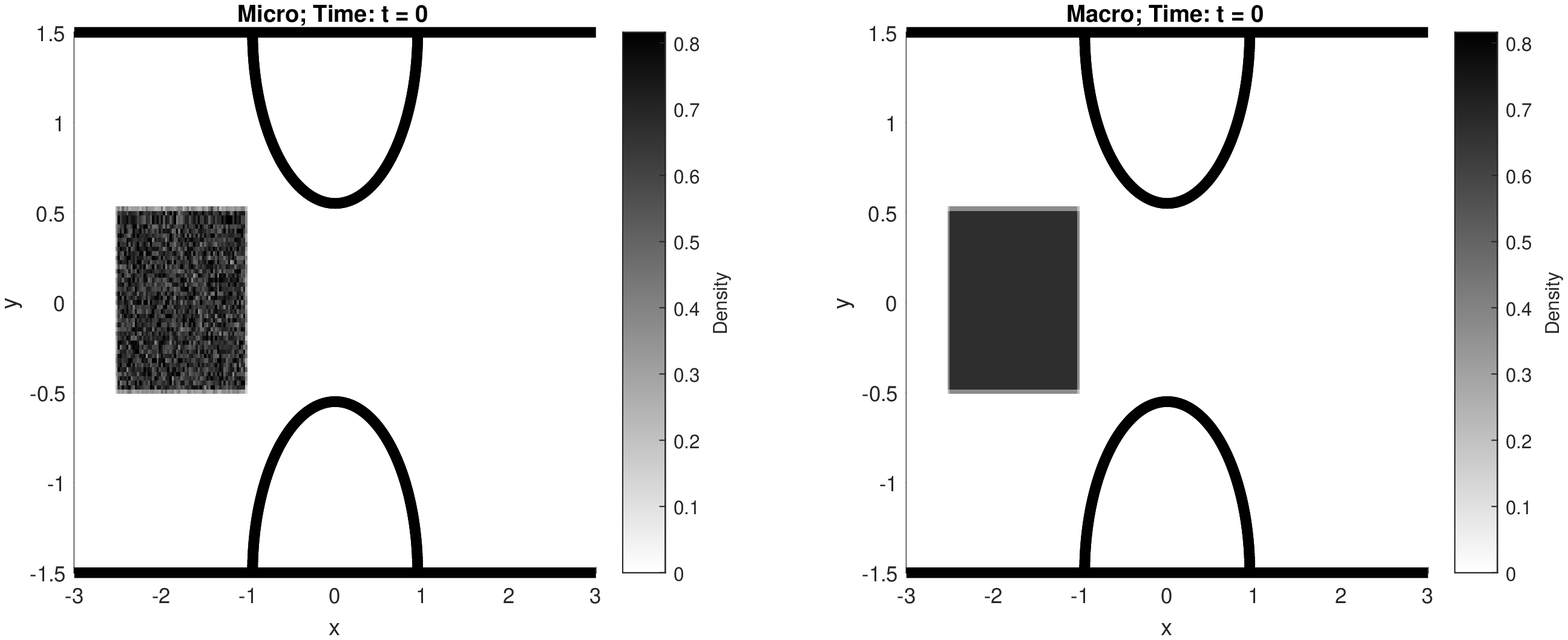}
}\qquad
\subfloat
{
\includegraphics[width = 1\textwidth]{./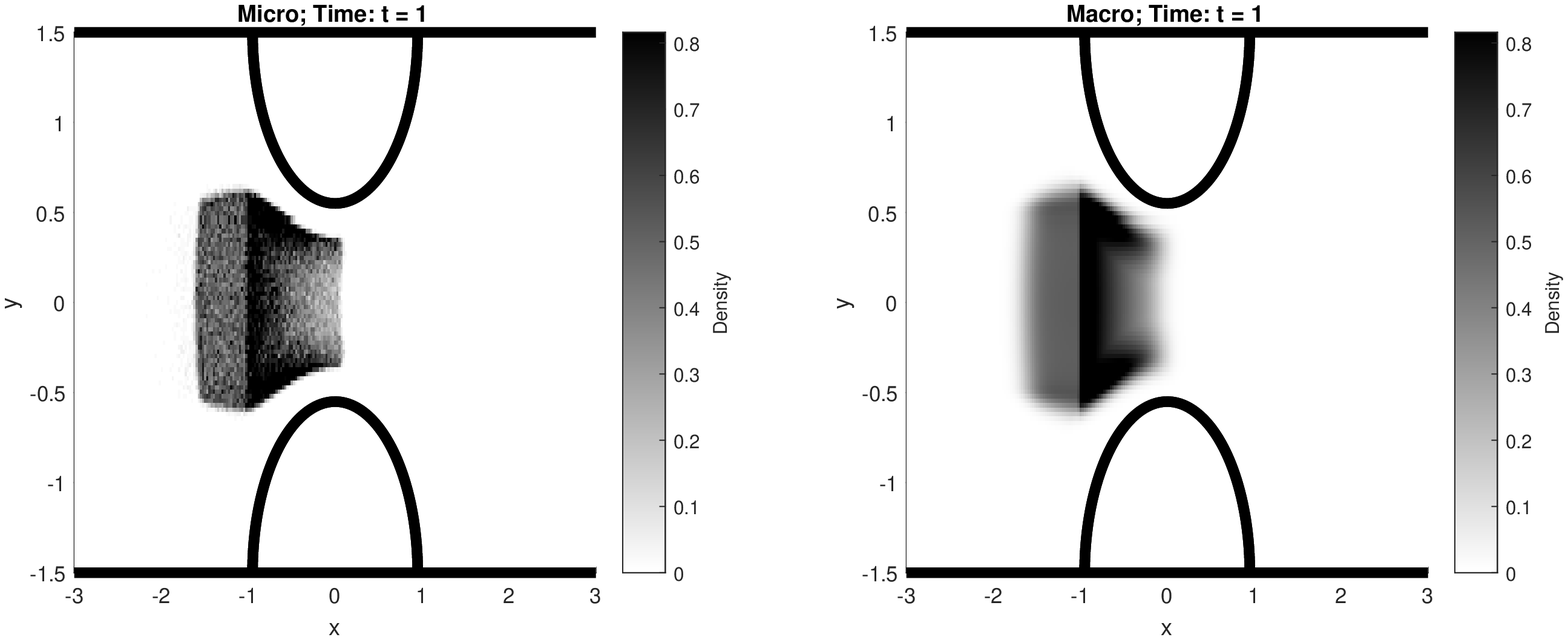}
}\qquad
\subfloat
{
\includegraphics[width = 1\textwidth]{./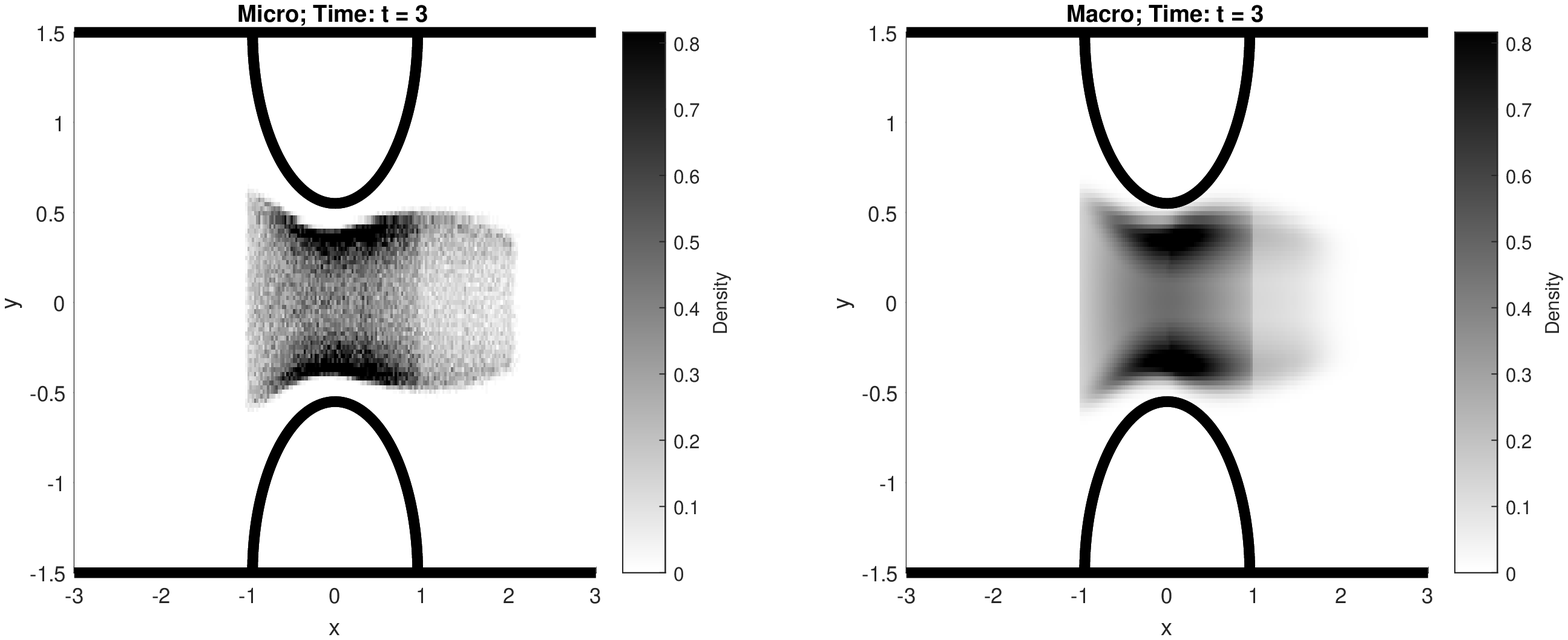}
}\qquad
\subfloat
{
\includegraphics[width = 1\textwidth]{./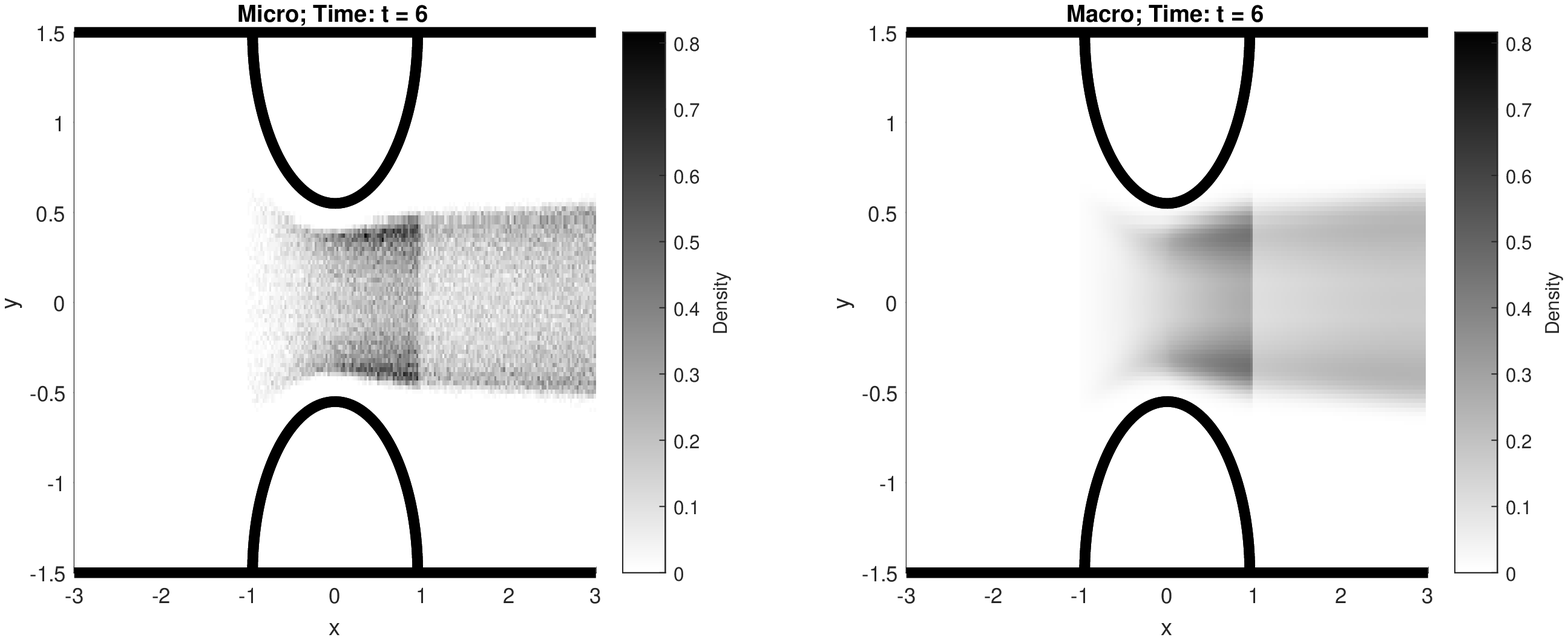}
}
\caption{Densities for $\lambda_1$ at different times}
\label{fig:Ex2DensSolStoch}
\end{figure}

\begin{figure}[htb!]
\centering
\subfloat
{
\includegraphics[width = 1\textwidth]{./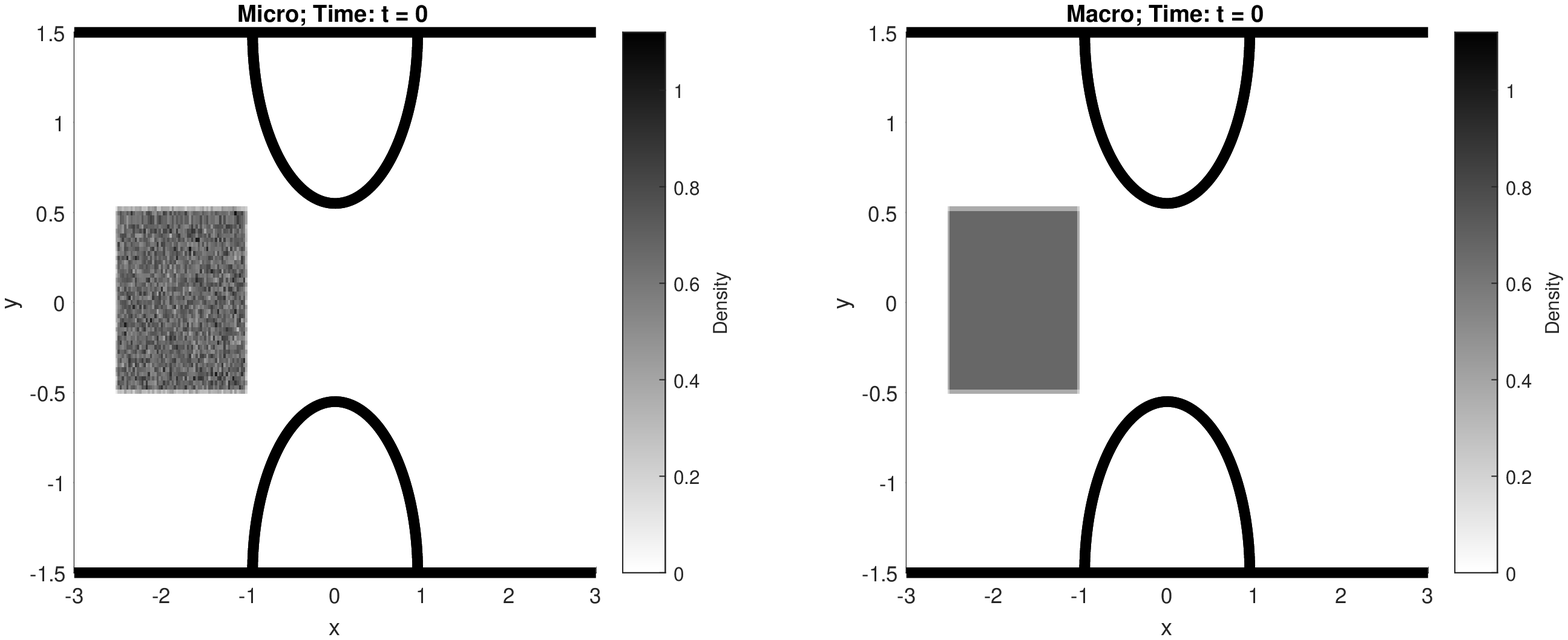}
}\qquad
\subfloat
{
\includegraphics[width = 1\textwidth]{./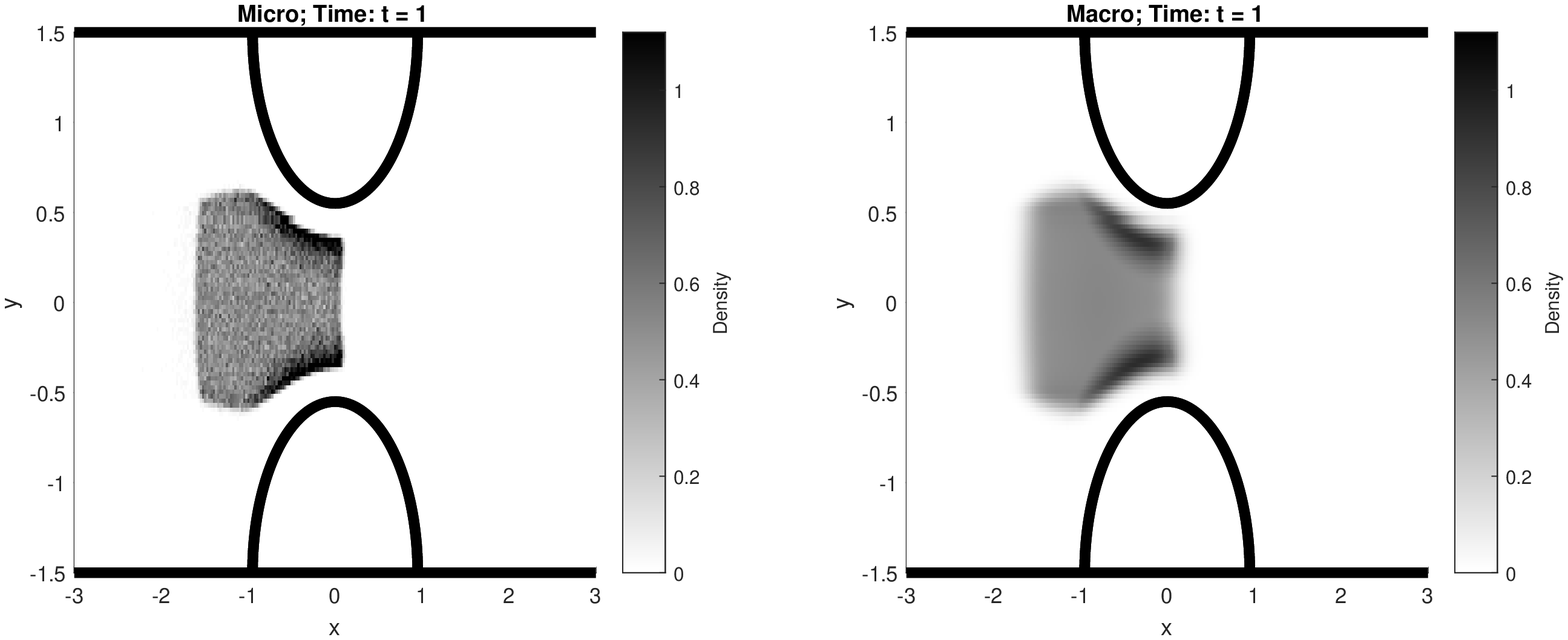}
}\qquad
\subfloat
{
\includegraphics[width = 1\textwidth]{./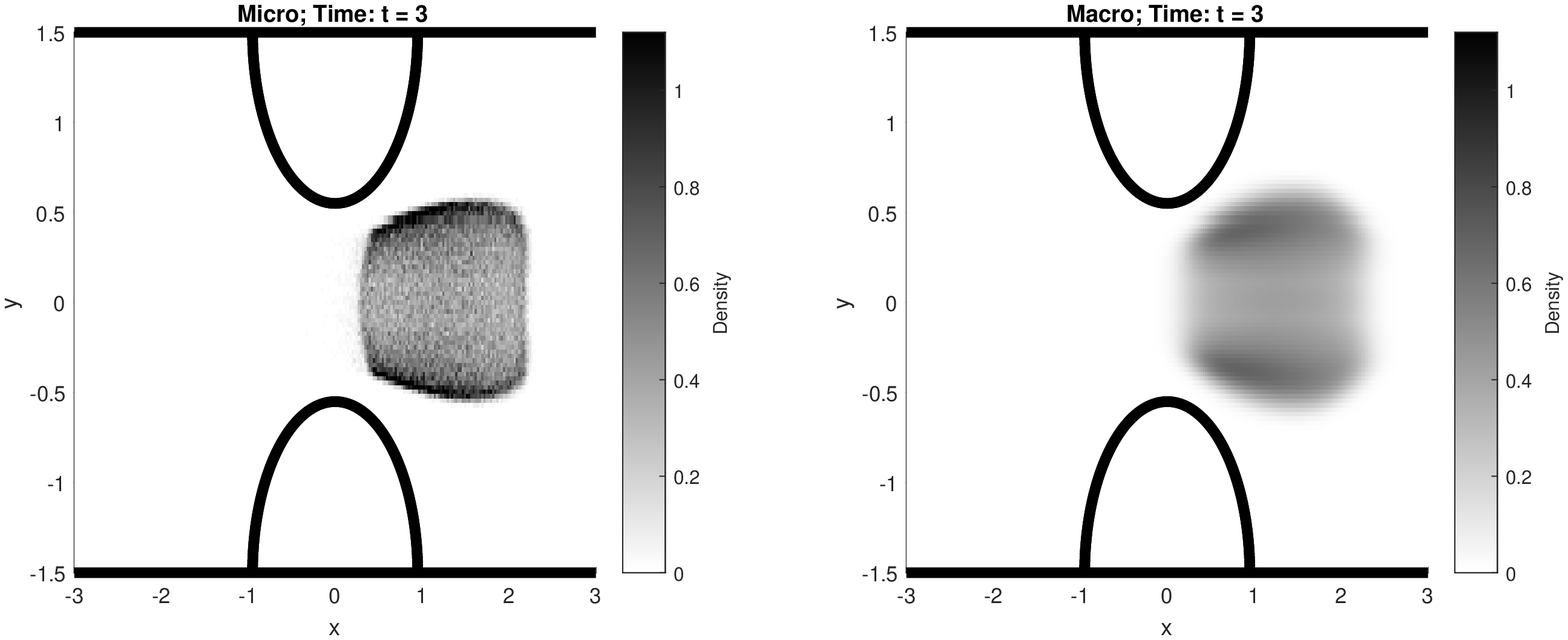}
}
\caption{Densities for $\lambda_2$ at different times}
\label{fig:Ex2DensSolDet}
\end{figure}

\begin{figure}[htb!]
\centering
\subfloat[Mass balance with $\lambda_1$]
{
\includegraphics[width = 0.5\textwidth]{./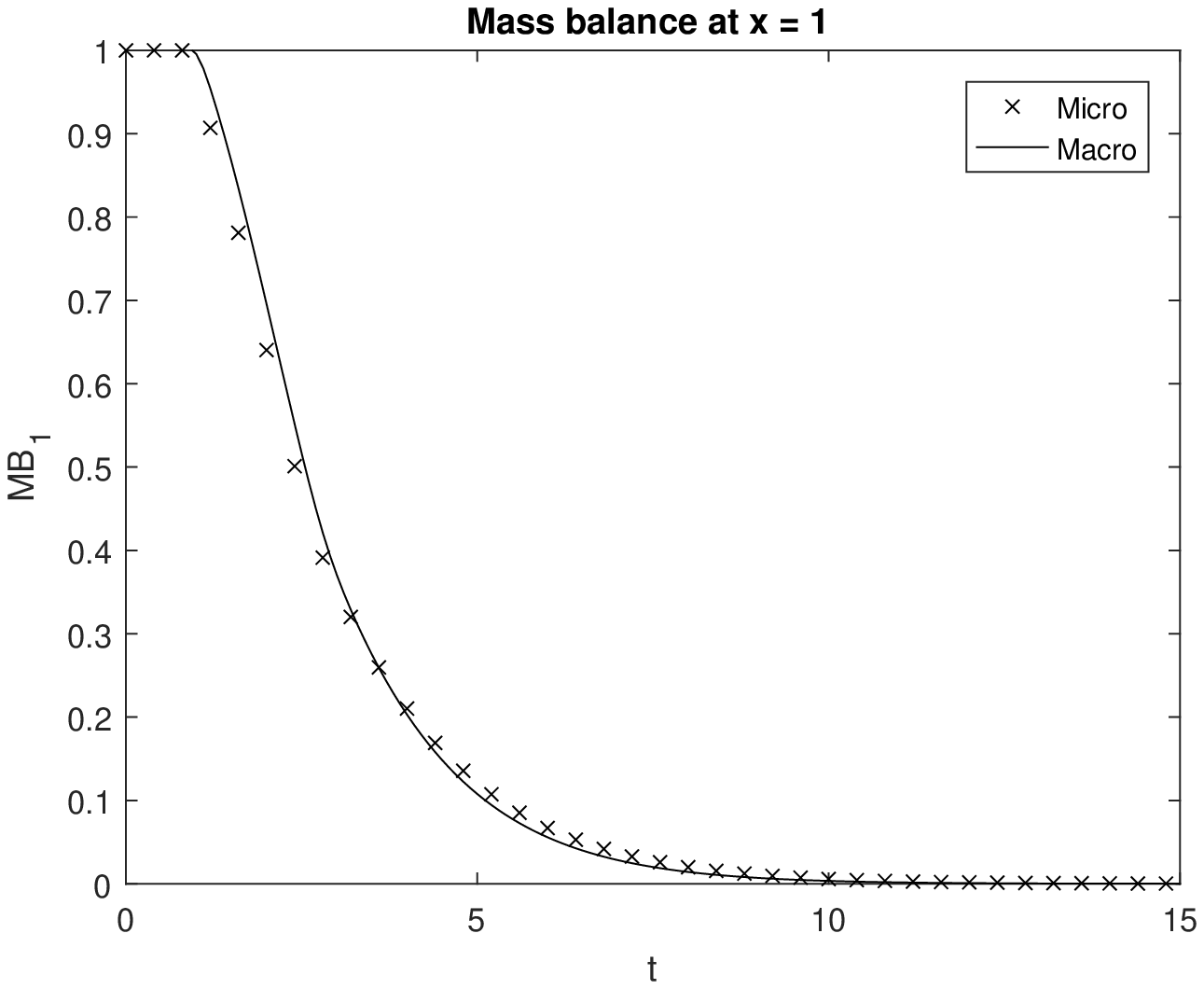}
}
\subfloat[Mass balance with $\lambda_2$]
{
\includegraphics[width = 0.5\textwidth]{./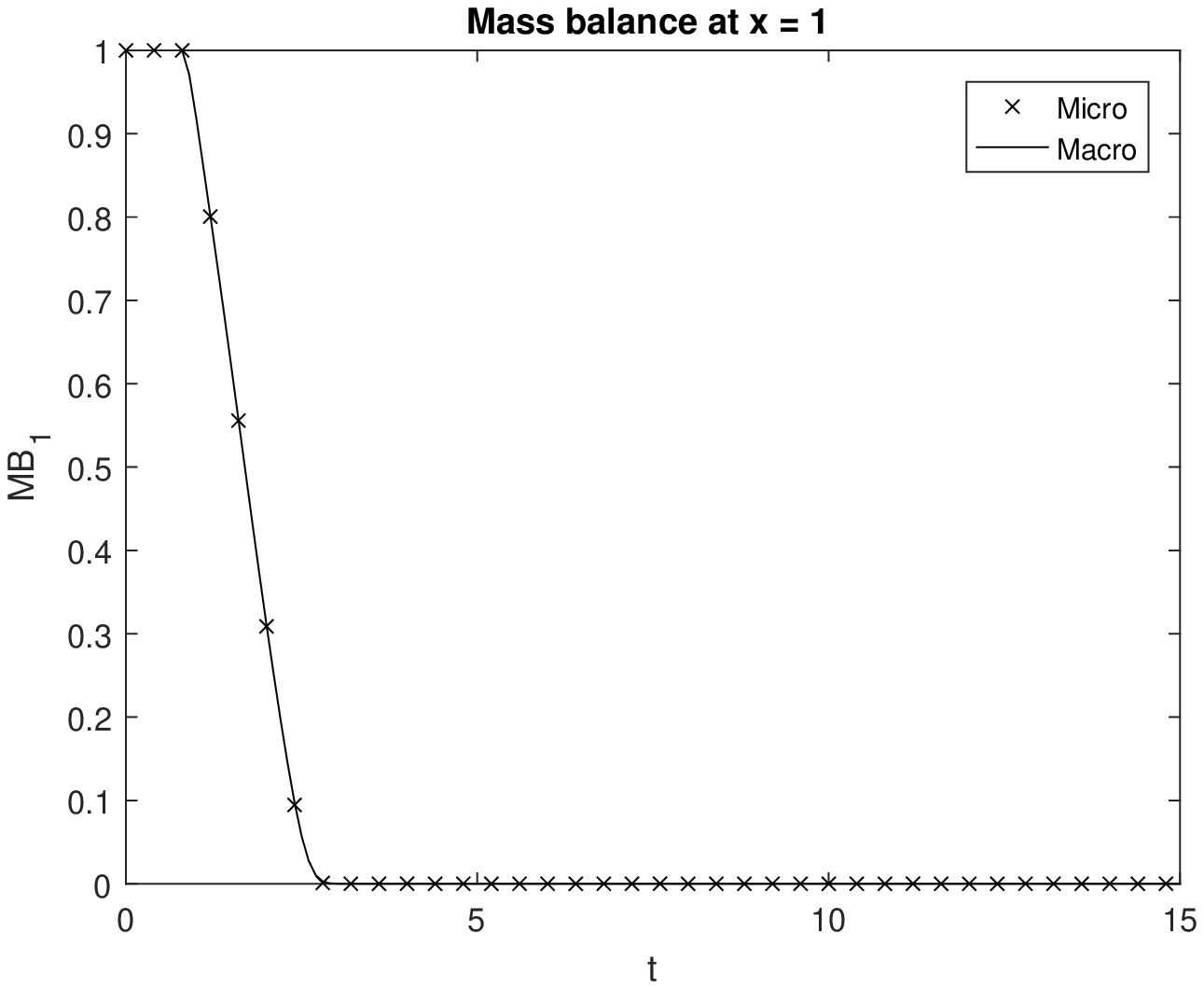}
}
\caption{Mass balances at $x = 1$}
\label{fig:Ex2MB}
\end{figure}

\begin{figure}
\centering
\subfloat[Errors with $\lambda_1$]
{
\includegraphics[width = 0.5\textwidth]{./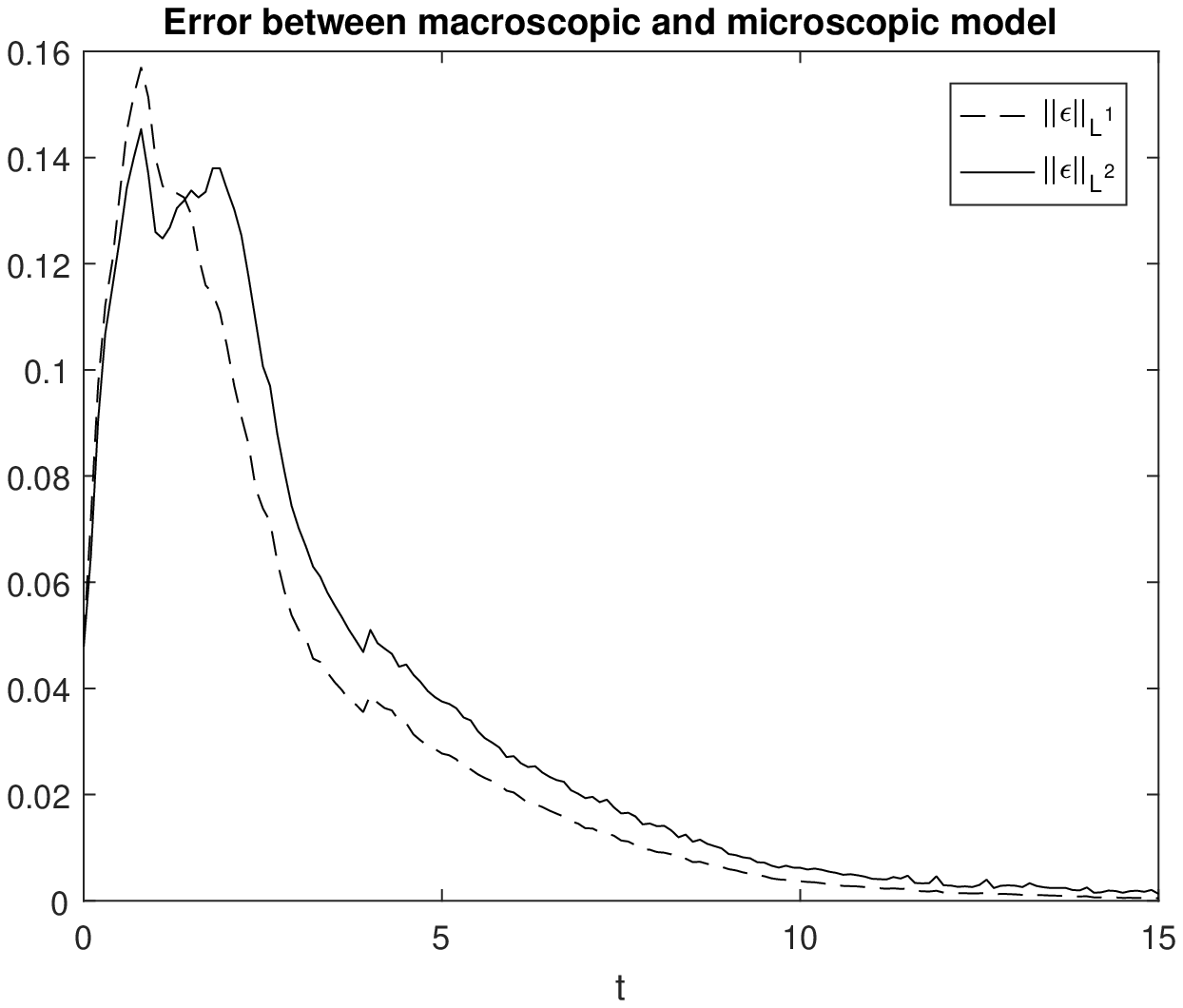}
}
\subfloat[Errors with $\lambda_2$]
{
\includegraphics[width = 0.5\textwidth]{./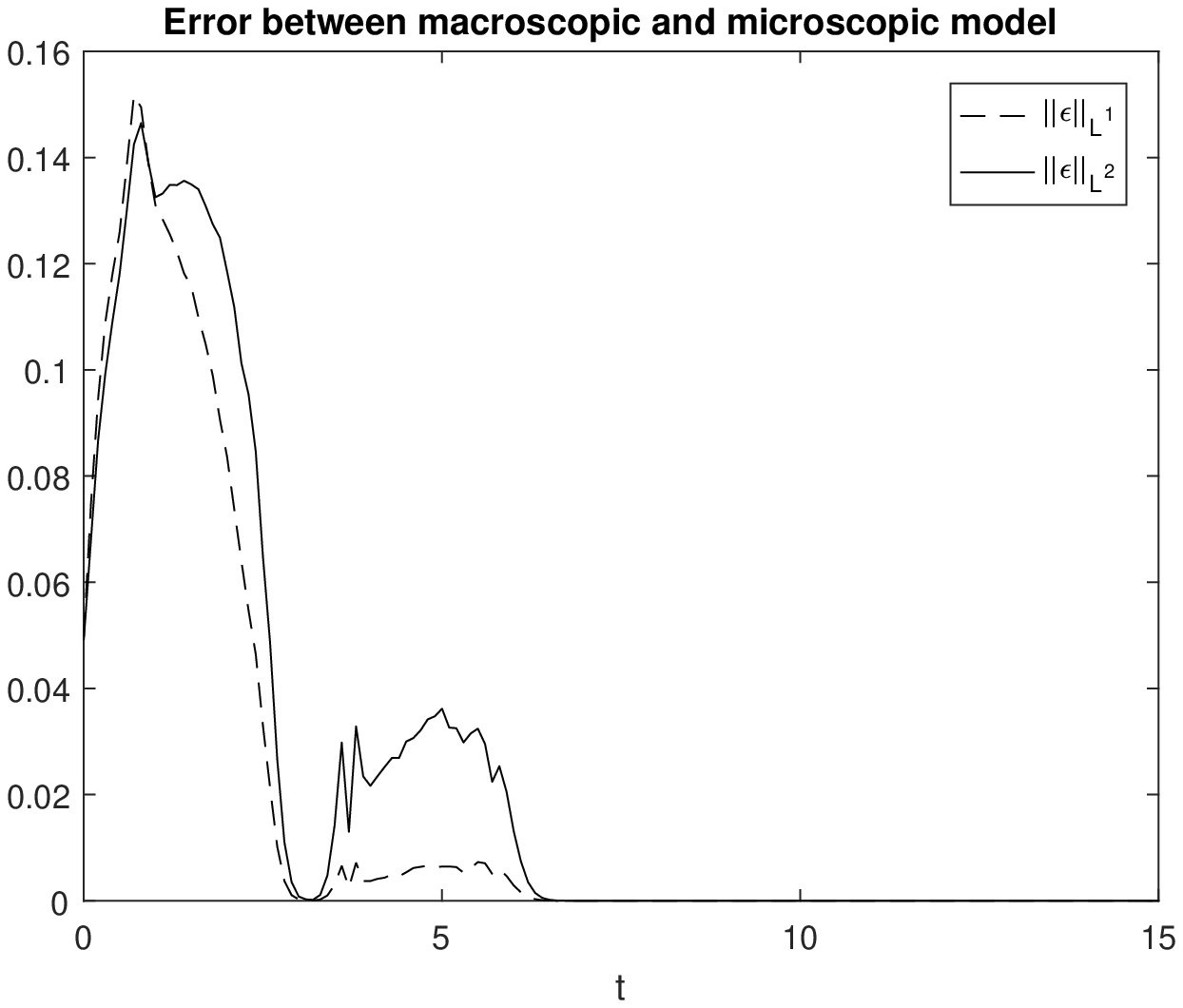}
}
\caption{$L^1$ and $L^2$ errors}
\label{fig:Ex2Errors}
\end{figure}

\section*{Conclusion}

We have introduced a stochastic microscopic pedestrian model with specular reflection type boundary conditions and proved its existence. Based on the microscopic model, we have derived a kinetic formulation under a mean field assumption in terms of measures. 
A non-standard closure assumption has been established to extract a first order macroscopic model given by
a system of conservation laws. 
Numerical comparisons of the microscopic and macroscopic pedestrian flow model have been performed
to demonstrate the same dynamical behavior for both modeling approaches.

Future work might include the investigation of a stochastic model hierarchy, where the Eikonal equation (instead of the
destination force) is used to determine the shortest travel time to the desired destination, see \cite{Etikyala2014}.
The derivation of a second order macroscopic model seems to be also very appealing in this context.


\section*{Acknowledgments}
This work was financially supported by the DAAD project ``DAAD-PPP VR China'' (project ID: 57215936)
and the DFG grant GO 1920/4-1.

\bibliographystyle{siam}
\bibliography{ms}


\end{document}